\newtheorem{theorem}{Theorem}[section]
\newtheorem{corollary}[theorem]{Corollary}
\newtheorem{lemma}[theorem]{Lemma}
\theoremstyle{definition}
\newtheorem{definition}[theorem]{Definition}
\newtheorem{remark}[theorem]{Remark}
\newcommand{\dist}{d}
\DeclareMathOperator{\diam}{diam}
\newcommand{\C}{\mathbb{C}}
\newcommand{\D}{\mathbb{D}}
\newcommand{\N}{\mathbb{N}}
\newcommand{\R}{\mathbb{R}}
\newcommand{\eps}{\varepsilon}
\numberwithin{equation}{section}
\begin{document}


\baselineskip=17pt


\title[]{On symmetries of the IFS attractor}

\author[]{Genadi Levin}
\address{Institute of Mathematics, The Hebrew University of Jerusalem, Givat Ram,
Jerusalem, 91904, Israel}
\email{levin@math.huji.ac.il}

\date{}

\begin{abstract}
We apply some methods and technique of complex dynamics to study the set of
symmetries of attractors of holomorphic Iterated Function Systems (IFS), as well as relations between IFS sharing the same attractor.
\end{abstract}

\subjclass[2020]{Primary 37F05, 37C79; Secondary 39B12}

\keywords{iterates function system, holomorphic dynamics, symmetry}

\maketitle

\section{Introduction}


An iterated function system (IFS) is a finite collection $G=(g_1,...,g_m)$ of contractions of a complete metric space $X$.
The attractor of the IFS is a unique non-empty compact set $A\subset X$ such that $A=\cup_{i=1}^m g_i(A)$
 \cite{hutch}.


The aim of the present paper is to apply methods of complex dynamics as in \cite{le}, \cite{lp} to studying the set of
symmetries of attractors of holomorphic IFS on the complex plane (see Definition \ref{d-ifs}), as well as
families of such IFS sharing the same attractor.

Similar (and apparently closely related) problems in the dynamics of rational functions on the Riemann sphere about symmetries of the Julia set and rational functions sharing the same Julia set have been studied rather intensively, see \cite{B1, B2, be, E, Fe, le, lp}, and \cite{duj, ji} for recent important variations.

As far as we are aware, the existing literature on the above mentioned problems for IFS
is devoted almost exclusively to the case
of IFS consisting of
affine contractions of $\R^d$,
with more elaborate results
if the contractions are semilitudes (i.e., affine $f:\R^d\to\R^d$ such that $\dist(f(x),f(y))=\rho\dist(x,y)$
for some similarity factor $\rho\in (0,1)$ and all $x,y$;
the attractor of such IFS is called a self-similar set) or even homogeneous semilitudes (i.e., all having the same similarity factor),
see
\cite{deng}, \cite{elekes}, \cite{moran}, \cite{yao} and comments below.

Let us describe briefly results of the paper. We call a plane compact {\it J-like} (relative to a domain) if the set of attracting fixed points of local holomorphic symmetries of the compact is dense (on the intersection of the domain with the compact), see Definitions \ref{d-symm}-\ref{d-jlike}.
A general result allowing us to deal with the above problems is Theorem \ref{t-symm}
about symmetries of J-like compacts which says roughly speaking that any non-trivial local family of symmetries of $K$ is finite provided that
the compact $K$ is J-like and "non-laminar" (note that all Cantor $K$ are not laminar).
It closely resembles the main result of \cite{le} on symmetries of the Julia set
of a rational function of the Riemann sphere.
Namely, it turns out that Theorem 1 of \cite{le} where we prove the finiteness of the set of local holomorphic symmetries of the Julia set can be carried over, with surprisingly little changes, to a more general setting of J-like compacts.

We derive from Theorem \ref{t-symm} that any J-like non-laminar compact $K$ with no isolated points admits at most finitely many
homeomorphisms of $K$ which extend to conformal homeomorphisms to a fixed neighborhood of $K$, see Theorem \ref{t-homeo}.
(Recall that conformal means preserving angles but not necessarily orientation.)
It generalizes some results about the group of isometries of IFS attractors,
see \cite{deng}, \cite{elekes}, \cite{moran} where proofs are heavily based on the fact that isometries are affine maps.

A striking similarity between the Julia set $J$ of a rational function $f$ and the attractor $A$ of an IFS of contractions $g_1,...,g_m$ is that the set
of (repelling) fixed points of the iterates $f^n$ of $f$ and the set of (attracting) fixed points of all finite compositions
$g_{i_1}\circ...\circ g_{i_n}$ are dense in $J$ and in $A$, respectively. Although $g_i$'s are not always symmetries of $A$, in Lemma \ref{l-attrjlike} we give several sufficient conditions that they are.
It involves so-called the {\it strong separation condition (SSC)} and (apparently weaker) the {\it strong open set condition (strong OSC) with the set} $O$, see e.g. \cite{hutch}, see also definitions in Subsection \ref{ss-def}.
Lemma \ref{l-attrjlike} says, in particular, that if an IFS satisfies the strong OSC with an open set $O$ then its attractor is J-like relative every component of $O$. In particular, attractors of IFS with the SSC are J-like.
This allows us to apply Theorem \ref{t-symm} to IFSs.



Let $IFS(A)$ be a collection of IFSs from a given category (e.g. affine, semilitudes, holomorphic etc) which all have the same attractor $A$.
${\it IFS}(A)$ is well understood for some special subclasses of semilitudes: homogeneous IFS under certain separation conditions \cite{feng}, \cite{deng0},
and non-homogeneous IFS under some strong
extra conditions \cite{deng0}, \cite{yao}.
Summarising roughly these results, the semigroup ${\it IFS}(A)$ (by composition) for those subclasses of semilitudes is finitely generated.

We consider ${\it IFS}(A)$ for holomorphic IFS and ask the following very basic question:
given $m\ge 2$, under what conditions the total number of $G\in {\it IFS}(A)$ such that the number of maps $\# G$ in each $G$ does not exceed $m$ is finite?

This question makes sense and is also fundamental for the semigroup of rational functions $f$ sharing the same Julia set (where $m$ should mean the degree of $f$).
Although the answer is Yes in some cases (e.g. polynomials) it is still unknown in general whether the number of (non-exceptional) rational functions
of a given degree sharing the same Julia set is finite, see \cite{lele}.

In Theorem \ref{t-finite} we prove that a collection $\mathcal{G}\subset {\it IFS}(A)$
of holomorphic IFS with the same attractor $A$ and such that
\begin{equation}\label{int-finite}
\sup\{\# G: G\in\mathcal{G}\}<\infty
\end{equation}
is finite provided that there exists an open set
$U$ intersecting $A$ such that $g(U)$ are pairwise disjoint for all $g\in G$ and any $G\in\mathcal{G}$.
It turns out that such set $U$ always exists if all $G$ are real and the SSC holds. Thus (\ref{int-finite}) along is enough that
the collection $\mathcal{G}$ of real holomorphic IFS $G$ with the SSC sharing the same
(real) attractor is finite.
see Corollary \ref{c-finite} where we use special domains
coming from holomorphic dynamics to construct the set $U$.


Then we apply Theorem \ref{t-symm} to the problem of characterising
holomorphic IFS which have the same attractor.
In Theorem \ref{t-symmifs}, we obtain necessary and sufficient conditions for two holomorphic IFS $G=\{g\}$, $F=\{f\}$ under the SSC to share the attractor in two (equivalent) form: an infinite system of functional equations connecting iterates of maps and their inverses that hold locally, see (II) of Theorem \ref{t-symmifs}, and a finite number of functional equations that hold on neighborhoods of the attractors of $G$ and $F$, see (III) of that theorem. Corollary \ref{c-preper} says that $G,F$ share the same attractor if and only if their sets of (eventually) periodic points coincide.
See also Remark \ref{r-rat} where it is shown that if all maps of IFS $G$, $F$ happen to be local inverse branches of rational functions $\varphi, \psi$ respectively, then all these equations of Theorem \ref{t-symmifs} are reduced to any single one of them and, in fact, take the following form:
\begin{equation}\label{lp}
\varphi^{-m}\circ \varphi^M=\psi^{-n}\circ \psi^N
\end{equation}
identically on an open set, for some $0<m<M$, $0<n<N$ and some local branches $\varphi^{-m}$, $\psi^{-n}$.
This follows essentially from \cite{lp} where the latter functional equation (\ref{lp}) between rational functions $\varphi$, $\psi$ plays the major role.

In Theorem \ref{t-finitegen},
given an IFS under the SSC,
we describe the set of all possible holomorphic IFS that satisfy the SSC and share the same attractor.
There are two immediate corollaries of Theorem \ref{t-finitegen} and its proof. Corollary \ref{c-exact} says (in particular) that, given a holomorphic IFS $G$ with the SSC along with a local symmetry $h$ of the attractor of $G$ in a ball centered at a fixed point $a$ of $h$, the
following conjugacy holds in some ball around $a$:
$$h^l=g_{\bf v}\circ g_{{\bf \tilde v}}\circ g_{\bf v}^{-1},$$
for some finite compositions $g_{\bf v}$, $g_{\bf \tilde v}$ of maps of $G$ and some iterate $h^l$ of $h$.
This generalizes a result of \cite{elekes} from similitudes to holomorphic IFS.
In turn, these conjugacy relations imply
that the multiplier spectra for two IFS with the same attractor are eventually the same, see Corollary \ref{c-sp}. This also extends corresponding results
from \cite{deng}, \cite{elekes} to holomorphic IFS.

\

{\bf Notations.}
\begin{itemize}
\item Domain is an open connected subset of the plane.
\item $f'$ denotes the standard (complex analytic) derivative of a function $f$.
\item $f^n$ is the $n$-iterate of a function $f$.
\item $B(x,\rho)=\{z\in\C: |z-x|<\rho\}$.
\item $f:D\to\C$ where $D$ is a domain is called univalent if it is holomorphic (analytic) injective or, equivalently, analytic diffeomorphism.
\item For a finite set $\Lambda$ and $k\in\N$, $\Lambda^k=\{{\bf w}=(i_1,...,i_k): i_1,...,i_k\in\Lambda\}$
is the set of all {\it "words"} of the length $|{\bf w}|=k$,
$\Lambda^*=\cup_{k=1}^\infty\Lambda^k$ is the set of all finite words, and
$\Lambda^\N=\{(i_1, i_2,...,i_n,...): i_n\in\Lambda, n=1,2,...\}$
the set of infinite words.
\item $\dist(E_1,E_2)$ and $\diam E$ denote the Euclidean distance between $E_1,E_2\subset\R^2$ and diameter of $E\subset \R^2$ respectively.
\end{itemize}
{\bf Acknowledgments.} The author thanks the referee for careful reading and many comments which helped to revise the paper.
\section{Local (holomorphic) symmetries of plane compacta}\label{s-symm}
\begin{definition}\label{d-symm}
For a compact $K$ in the complex plane $\C$ and a domain $D\subset \C$ with $D\cap K\neq\emptyset$,
a holomorphic map
$H: D\to\C$ is a (local holomorphic) {\it symmetry} of $K$ on $D$ if
$$x\in D\cap K \Leftrightarrow H(x)\in H(D)\cap K.$$
A family of such symmetries $\mathcal{H}$ on a domain
$D$ is called {\it non-trivial} if it is a normal family (i.e., every sequence has a subsequence that converges uniformly on compacts) and any limit function is not a constant.
\end{definition}


Let us define a class of compact sets of the plane for which our results apply.
For a holomorphic function $H: D\to\C$, a point $b\in D$ is called an {\it attracting fixed point} if $H(b)=b$ and $0<|H'(b)|<1$.


\begin{definition}\label{d-jlike}
A compact $K\subset\C$ is {\it $J$-like} ($J$ stands for Julia) {\it relative to a domain} $U$ if the set of attracting fixed points of local holomorphic symmetries of $K$ is dense in $U\cap K$.
If $U$ is a neighborhood of $K$, then the entire $K$ is called a {\it $J$-like} compact set.
\end{definition}

Examples of $J$-like compact sets include, among others, Julia sets $J$ of rational functions (assuming $J\subset\C$) \cite{le}, \cite{lp}, and attractors of holomorphic IFS under some separation conditions, see the next Sect.


\begin{theorem}\label{t-symm}
Let a compact $K\subset\C$ with no isolated points be $J$-like relative to a domain $U$. Then
either
\begin{enumerate}
\item any non-trivial family of symmetries of $K$ defined on $U$ is finite,

or
\item any open subset of $U$ intersecting $K$ contains a simply-connected domain $\Omega$ such that $\overline{\Omega}\cap K$ is analytically diffeomorphic to the product
$I\times L\subset \C$ of the unit interval $I=[0,1]$ and a compact set $L\subset i\R$
(i.e., there is an analytic diffeomorphism $\psi$ defined in a neighborhood of $\overline{\Omega}$ such that $\psi(\overline{\Omega}\cap K)=[0,1]\times L$).
\end{enumerate}
\end{theorem}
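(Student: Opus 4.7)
The plan is to imitate closely the strategy of Theorem~1 of \cite{le}: assuming alternative (1) fails, extract from an infinite family of symmetries a non-trivial holomorphic vector field that preserves $K$ infinitesimally, and then use the $J$-like property to upgrade this to the product structure in (2). The main changes from \cite{le} are that the density of repelling periodic points of a rational map is replaced by the density of attracting fixed points of local symmetries of $K$, and the linearization is performed by Koenigs at attracting fixed points instead of at repelling ones.

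First I would reduce to a sequence of symmetries close to the identity. If alternative (1) fails, there is a non-trivial family $\mathcal{H}$ on $U$ of infinite cardinality. By normality, extract pairwise distinct $H_n \in \mathcal{H}$ converging uniformly on compacts of $U$ to a non-constant limit $H_\infty$. On a subdomain $\Omega \subset U$ where $H_\infty$ is univalent, set $F_n := H_\infty^{-1} \circ H_n$; these are local symmetries of $K$ on $\Omega$ with $F_n \to \mathrm{id}$ uniformly on compacts and $F_n \not\equiv \mathrm{id}$ for infinitely many $n$.

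Next I would extract the rescaling limit. For $\Omega' \Subset \Omega$ set $\eps_n := \sup_{\Omega'}|F_n(z) - z| > 0$, so $\eps_n \to 0$. The rescaled differences $\eta_n := (F_n - \mathrm{id})/\eps_n$ are uniformly bounded and holomorphic on $\Omega'$, and a subsequence converges to a holomorphic $\eta$ with $\sup_{\Omega'}|\eta| = 1$; in particular $\eta \not\equiv 0$. The central claim is that the holomorphic vector field $\eta\,\partial_z$ preserves $K$ locally, in the sense that its time-$t$ flow $\Phi_t$ satisfies $\Phi_t(K \cap V) \subset K$ for every sufficiently small neighborhood $V$ and every small $|t|$. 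The idea is to approximate $\Phi_t$ by iterated compositions $F_n^{k_n}$ with $k_n\eps_n \to t$; each such composition is a symmetry of $K$ on a slightly shrunken domain, and passing to the limit yields genuine invariance of $K$ under $\Phi_t$. Controlling the shrinkage of domains under $k_n = O(1/\eps_n)$ iterations so that the limit remains meaningful is the main obstacle and the technical heart of the argument, just as in \cite{le}.

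Once $\eta\,\partial_z$ is known to preserve $K$, I would invoke the $J$-like hypothesis to deduce (2). Fix any open $W \subset U$ meeting $K$. Since the zero set of $\eta$ is discrete and attracting fixed points of local symmetries of $K$ are dense in $U \cap K$, I can pick $b \in W \cap K$ with $\eta(b) \neq 0$ together with a local symmetry $g$ of $K$ satisfying $g(b) = b$ and $0 < |g'(b)| < 1$. Linearize $g$ by the Koenigs coordinate $\phi$ near $b$, so that $\phi \circ g \circ \phi^{-1}(w) = g'(b)\,w$. The integral curve of $\eta$ through $b$ is an analytic arc $\gamma \subset K$ through $b$. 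Because $g$ is a symmetry of $K$, $g(\gamma) \subset K$; applying the local inverse branches $g^{-n}$ fixing $b$ expands $\gamma$ in $\phi$-coordinates by the factor $g'(b)^{-n}$, and this process fills a neighborhood of $b$ with analytic arcs lying in $K$. Choosing a transverse analytic arc through $b$, whose intersection with $K$ furnishes the set $L$, and rectifying via a suitable analytic change of coordinate built from $\phi$ and this transverse arc, produces a simply-connected $\Omega \subset W$ and an analytic diffeomorphism $\psi$ defined near $\overline{\Omega}$ with $\psi(\overline{\Omega} \cap K) = [0,1] \times L$ for some compact $L \subset i\R$. Since $W$ was arbitrary, this is alternative (2), completing the dichotomy.
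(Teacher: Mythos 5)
Your overall strategy matches the one the paper intends (the paper simply defers to Theorem 1 of \cite{le}, instructing the reader to replace the local inverse branches of a rational map near their attracting fixed points by the local symmetries with attracting fixed points that the $J$-like hypothesis provides). But as written the proposal has a genuine gap at exactly the point you flag yourself: the ``central claim'' that the rescaled limit $\eta\,\partial_z$ generates a flow preserving $K$ \emph{is} the content of the theorem, and approximating $\Phi_t$ by $F_n^{k_n}$ with $k_n\sim t/\eps_n$ is asserted, not carried out. This is also not the mechanism of \cite{le}/\cite{lp}: there one does not iterate the near-identity symmetries $O(1/\eps_n)$ times, but conjugates a single $F_n=\mathrm{id}+\eps_n\eta_n$ by high iterates of a contracting local symmetry $g$ at an attracting fixed point $b$ with $\eta(b)\neq 0$. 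In the Koenigs coordinate, $g^{-k}\circ F_n\circ g^{k}$ becomes $w\mapsto w+\eps_n\lambda^{-k}\eta_n(b)+o(1)$, and choosing $k=k(n)$ with $|\eps_n\lambda^{-k}|\asymp 1$ and passing to a normal-families limit produces genuine translations that are symmetries of the linearized $K$ --- the continuous family appears directly in the limit, with no long compositions to control. So in the intended proof the $J$-like hypothesis is what makes the hard step work, whereas in your write-up it only enters \emph{after} the hard step, where it is essentially redundant: once the flow is known to preserve $K$, the flow-box theorem already yields the product structure near any point where $\eta\neq 0$, with no need for Koenigs linearization.

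Two further local problems. First, in the last paragraph you fix an arbitrary open $W\subset U$ meeting $K$ and pick $b\in W\cap K$ with $\eta(b)\neq 0$, but $\eta$ was constructed only on $\Omega'\Subset\Omega$, a subdomain where $H_\infty$ is univalent; it is not defined on $W$. This is repairable (since $H_\infty$ is non-constant on the domain $U$, its critical values are attained with discrete critical set, so the whole construction can be re-run inside any prescribed $W$ meeting $K$), but the repair must be stated, because conclusion (2) quantifies over every open subset of $U$ meeting $K$. Second, a locally uniform limit of symmetries only gives the inclusion $H(K\cap D)\subset K$ a priori; to conclude that $\Phi_t$ is a symmetry in the sense of Definition \ref{d-symm} you must also run the argument on the inverse maps to obtain the reverse implication.
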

Let us call a compact set $K$ as in the case (2) {\bf laminar} relative to a domain $U$. Consequently, we call $K$
{\bf non-laminar} if there is no domain $U$ such that $K$ is laminar relative to $U$.


Note that the case (1) holds whenever $K$ is a Cantor $J$-like compact set (Cantor sets are obviously non-laminar).
A compact $K\subset\R$ is non-laminar if and only if $K$ contains no intervals.

{\bf Proof of Theorem \ref{t-symm}} repeats almost literally the one of \cite{le}, Theorem 1 for symmetries of Julia sets
(see also Lemma 2 of \cite {lp} for a somewhat shorter account). There are two reasons for this to make it work: first, the proof in \cite{le} is local; secondly, it is based on the fact that repelling fixed points of iterates of the rational function
(equivalently, attracting fixed points of their local inverse branches) are dense in the Julia set.
So assuming there is a non-trivial infinite family of symmetries of $K$ defined in $U$
we follow the proof of \cite{le} (omitting Lemma 2 and Remark 3 there), replace local inverse branches of iterates of the rational function near their attracting fixed points which are used in the course of the proof of Theorem 1 of \cite{le} by appropriate local holomorphic symmetries of $K$, and stop every time that we arrive at the case (2) of Theorem \ref{t-symm}.

\

We derive the following corollary about the group of all locally conformal automorphisms of $K$.


\begin{theorem}\label{t-homeo}
Let $K$ be a J-like non-laminar compact on the plane with no isolated points.
Let $V$ be a neighborhood of $K$ such that every component of $V$ intersects $K$.
Then the set
$$\mathcal{I}_V=\{f:V\to\C | f(K)=K, f \mbox{ conformal injective}\}$$
is finite.

\end{theorem}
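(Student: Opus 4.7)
Plan. The strategy is to deduce Theorem \ref{t-homeo} from Theorem \ref{t-symm} by exhibiting $\mathcal{I}_V$ (or a suitable restriction of it) as a non-trivial family of holomorphic symmetries of $K$. First I would decompose $\mathcal{I}_V = \mathcal{I}_V^+ \sqcup \mathcal{I}_V^-$ into orientation-preserving and orientation-reversing parts. If $\mathcal{I}_V^-$ is non-empty, fix $f_0 \in \mathcal{I}_V^-$; the map $f \mapsto f_0^{-1}\circ f$ sends $\mathcal{I}_V^-$ injectively into a family of \emph{holomorphic} symmetries of $K$ on some common open neighbourhood $W_0$ of $K$, where $W_0$ is extracted from the equicontinuity of the normal family $\mathcal{I}_V^-$ (the normality argument below applies verbatim to $\mathcal{I}_V^-$ after conjugation) together with $K \subseteq f_0(V)$, so that $f(W_0) \subseteq f_0(V)$ for every $f$. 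Finiteness of $\mathcal{I}_V^-$ then follows from that of the holomorphic case on $W_0$; thus the core of the proof is showing $\mathcal{I}_V^+$ is finite.

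Only finitely many components of $V$ meet $K$ (by compactness of $K$ and the hypothesis that every component of $V$ meets $K$), and each $f \in \mathcal{I}_V^+$ induces by injectivity a permutation of the finite collection $\{K \cap V_i\}_i$; partitioning $\mathcal{I}_V^+$ by this permutation reduces to the subfamily inducing the identity, for which $f(K \cap V_i) = K \cap V_i$ for every $i$. Fix one component $V'$ of $V$ and set $K' = K \cap V'$. Then $f|_{V'}$ is a holomorphic symmetry of $K'$ on $V'$ in the sense of Definition \ref{d-symm} (the symmetry condition reduces to $f(V') \cap K' = f(V' \cap K')$, which follows from injectivity of $f$ on $V$ and $K' \subseteq V'$); and $K'$ inherits from $K$ the properties of having no isolated points, being non-laminar, and being J-like relative to $V'$ (for the last, a local holomorphic symmetry of $K$ with attracting fixed point in $K'$, restricted to a sufficiently small domain inside $V'$, is automatically a local symmetry of $K'$, thanks to the positive distance separating distinct parts $\{K \cap V_i\}$).

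It remains to verify that $\{f|_{V'} : f \in \mathcal{I}_V^+\}$ is a non-trivial family of symmetries on $V'$, so that Theorem \ref{t-symm} (case (1), by non-laminarity) yields its finiteness. Normality follows from Koebe's distortion theorem: for two distinct points $a, b \in K'$ (which exist as $K'$ has no isolated points) one has $|f'(a)| \leq C_{a,b,V'} \cdot |f(a) - f(b)| \leq C_{a,b,V'} \cdot \diam K$ uniformly in $f$, and combined with $|f(a)| \leq \max_{w \in K}|w|$ this yields local uniform boundedness on $V'$, hence normality by Montel. The absence of constant subsequential limits is clean: $f_n|_{V'} \to c$ uniformly on the compact set $K' \subseteq V'$ would force $K' = f_n(K') \to \{c\}$ in Hausdorff metric, contradicting $\diam K' > 0$. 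The identity theorem (applied on $V'$, using accumulation points of $K'$ there) then lifts finiteness of $\{f|_{V'}\}$ back to finiteness of $\mathcal{I}_V^+$, completing the argument. The main obstacle I anticipate is the multi-component bookkeeping, especially verifying that the J-like property transfers from $K$ to each restriction $K \cap V_i$.
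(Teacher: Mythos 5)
Your single-component core is essentially the paper's: normality via the Koebe distortion theorem plus boundedness of $f(K)$, composition with a fixed $f_0^{-1}$ to turn anti-holomorphic maps into holomorphic symmetries, and Theorem \ref{t-symm} to conclude finiteness of a non-trivial family. But the multi-component bookkeeping you flag as the anticipated obstacle is exactly where the argument breaks, in two places. First, the claim that every $f\in\mathcal{I}_V^+$ ``induces by injectivity a permutation of $\{K\cap V_i\}_i$'' is unjustified: injectivity only gives that the sets $f(K\cap V_i)$ partition $K$, not that each one equals some $K\cap V_j$. Since $K\cap V_i$ is in general disconnected and $f(V_i)$ is a connected open set that need not be contained in $V$, the image $f(K\cap V_i)$ can be spread over several pieces $K\cap V_j$. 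This is not cosmetic: the permutation claim is what you use to get $f_n(K')=K'$ and hence $\diam f_n(K')=\diam K'>0$ in the non-triviality step. Without it, a sequence in $\mathcal{I}_V$ may converge to a constant on one fixed component while staying non-degenerate on another, so non-triviality on a component chosen in advance genuinely fails.

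Second, the closing step does not close: finiteness of the restrictions $\{f|_{V'}\}$ to a single component cannot be ``lifted by the identity theorem'' to finiteness of $\mathcal{I}_V^+$, because the identity theorem says nothing across distinct components of $V$ --- two maps may coincide on $V'$ and differ on some other $V_j$. The paper avoids both problems by a different mechanism: it first establishes a uniform $T=T(K,V)>0$ such that every $f\in\mathcal{I}_V$ satisfies $\|f'\|(x_f)\ge T$ at some point $x_f\in K$ lying in some component $D_{j(f)}$ depending on $f$ (otherwise $f(K)=K$ would have arbitrarily small diameter); Lemma \ref{l-conf} then shows that, on each fixed component, the subfamily with such a derivative lower bound has only finitely many restrictions; and an induction on the number of components finishes the proof, by freezing the restriction to one component along a subsequence, deleting that component together with the image of the corresponding part of $K$, and invoking the induction hypothesis on the remaining configuration. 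To repair your argument you would need either a proof of the permutation claim (which I do not see how to give in general) or something equivalent to the paper's uniform derivative bound combined with the induction on components.
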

Recall that $f:U\to\C$ is conformal on an open set $U$ if it is locally injective holomorphic or anti-holomorphic on each component of $U$.
For the proof, see Subsection \ref{ss-t-homeo}.

\section{Holomorphic IFS sharing the same attractor: preliminaries and main results}\label{s-ifs}
\subsection{Holomorphic IFS}\label{ss-def}
\begin{definition}\label{d-ifs}
A (finite) {\it holomorphic IFS} is a pair $(G, \Omega)$ where $\Omega$ is a bounded domain in $\C$ equipped with the hyperbolic metric
$d_\Omega$ and $G=\{g_i:\Omega\to\Omega\}_{i\in\Lambda}$ is a finite collection of univalent maps such that
$\overline{g_i(\Omega)}\subset\Omega$ for all $i$.



\end{definition}
Since $g_i(\Omega)$ is compactly contained in $\Omega$,
it follows from the Schwarz lemma (stated as Theorem \ref{schw} below)
that each $g_i:\Omega\to \Omega$ is a strict contraction of the complete metric space $(\Omega, d_\Omega)$.
Therefore, the pair $(G, \Omega)$ is a classical IFS.

{\bf Examples}.

1. Let $g_i(z)=\alpha_i z+ b_i$ where $|\alpha_i|<1$, $b_i\in\C$.
Take any big enough disk $\Omega=B(0, R)$. Then $(G, \Omega)$ is a holomorphic IFS.
Note that the hyperbolic geometry of $B(0, R)$ tends to the Euclidean one as $R\to\infty$, uniformly on compacts of the plane.

2. Let $\varphi(z)=z^2+c$ where the complex parameter $c$ is such that $\varphi^n(0)\to\infty$ as $n\to\infty$
(i.e., $c$ is outside of the Mandelbrot set). Let $B_\infty=\{z\in\C: \varphi^n(z)\to\infty, \ n\to\infty\}$ be the basin of infinity,
$\Phi$ is the Green function of $B_\infty$ such that $\Phi(z)=\log(z)+O(1)$ as $z\to\infty$ extended by zero to the whole plane,
and $\Omega=\{z: \Phi(z)<C\}$ for some $C$ less than $\Phi(c)$ and close enough to it. Then $\Omega$ is a simply connected domain and
$\varphi^{-1}$ has two well-defined branches on $\Omega$ denoted by $g_1, g_2$ so that
the closures of $g_i(\Omega)$, $i=1,2$ are contained in $\Omega$ and are pairwise disjoint. In other words,
$(G, \Omega)$ where $G=(g_1, g_2)$ is a holomorphic IFS. Note that the attractor of this IFS coincides with the Julia set
$J=\{z: \varphi^n(z)\in\Omega, n=0,1,2,...\}$ of $\varphi$.

Holomorphic IFSs appear also naturally in holomorphic dynamics as inverse to first return maps.


\

Denote by $A=A_G$ the attractor of a holomorphic IFS $(G, \Omega)$, that is,
a unique non-empty compact set $A\subset \Omega$ such that $A=\cup_{i=1}^m g_i(A)$.
We will always assume that $A$ contains at least two points:
$$\# A\ge 2.$$



For any word ${\bf w}=(i_1,...,i_k)\in\Lambda^*$, we let $g_{\bf w}=g_{i_1}\circ...\circ g_{i_k}:\Omega\to\Omega$.
Note that $g_{\bf w}$ is a univalent (holomorphic injective) function on $\Omega$ so that there exists an inverse
univalent map $g^{-1}_{\bf w}: g_{\bf w}(\Omega)\to \Omega$. Since $\cup_{i\in\Lambda}\overline{g_i(\Omega)}$ is a compact subset of $\Omega$,
then $g_{\bf w}(x)\to A$ as $|{\bf w}|\to\infty$ uniformly in $x\in\Omega$.
Each contraction $g_{\bf w}$ of the complete space $(\Omega, d_\Omega)$,
has a unique fixed point $\beta_{\bf w}\in\Omega$, moreover, $\beta_{\bf w}\in A$.
It is easy to see that $\beta_{\bf w}$ is an attracting fixed point of $g_{\bf w}$.
The following fact is fundamental \cite{hutch}:

($\star$) {\it the set
$${\bf Per}_G=\{\beta: g_{\bf w}(\beta)=\beta, \ {\bf w}\in\Lambda^* \}$$
of fixed points of all $g_{\bf w}$ is dense in $A$}.



While there exists $s_h\in (0,1)$ such that the norm of the derivative of each $g_i:\Omega\to \Omega$ in the hyperbolic metric $d_\Omega$ is less than $s_h$, this is not necessary so in the Euclidean one. However, for every compact set $K\subset\Omega$ there exist $N\ge 1$ and $s\in (0,1)$ such that $|g'_{\bf w}(z)|\le s^n$ for all ${\bf w}\in\Lambda^n$ with $n\ge N$ and all $z\in K$.
(Proof: choose a domain $\Omega'$ such that $K\subset\Omega'\subset\overline{\Omega'}\subset\Omega$ and $\cup_i g_i(X)\subset \Omega'$;
then there exist $C>0$ and $q\in (0,1)$ such that
$|g'_{\bf w}(z)|\le C q^n$ for all ${\bf w}\in\Lambda^n$ and all $z\in \overline{\Omega'}$. This is because the hyperbolic metric $d_\Omega$ and the Euclidean metric are equivalent being restricted to $\overline{\Omega'}$.
Then take $s\in(q, 1)$ and $N$ big enough.)


\subsection{Separation conditions and the associated inverse dynamics}\label{ss-sep}
Recall \cite{hutch} that IFS consisting of contractions $\{g_i\}$ and having the attractor $A$ satisfies the {\bf strong separation condition (SSC)} if
$$g_i(A)\cap g_j(A)=\emptyset \mbox{ for all } i\neq j,$$
and the {\bf strong open set condition (strong OSC)} with $O$ if $O$ is an open set
such that
$$A\cap O\neq\emptyset, \  g_i(O)\subset O \mbox{ for all } i, \ g_i(O)\cap g_j(O)=\emptyset \mbox{ for all } i\neq j.$$
Note that then \cite{hutch} $A\subset\overline{O}$.

The SSC implies the strong OSC, with the set $O$ to be a small neighborhood of $A$\footnote{For conformal IFS, the strong OSC and the OSC
(for which the condition $A\cap O\neq\emptyset$ is not required) are equivalent, with possibly different open sets, see \cite{peres}.}. The attractor of an IFS with at least two maps that satisfies the SSC is necessary a Cantor set with no isolated points.

Let us call a pair $(G, V)$ a {\bf box-like} restriction of a holomorphic IFS $(G, \Omega)$ where $G=\{g_i: i\in\Lambda\}$
if $V\subset\Omega$ is a neighborhood of the attractor $A_G$ such that the sets $\overline{g_i(V)}$ for $i\in\Lambda$ are pairwise disjoint and are contained in $V$.

If the SSC holds for an IFS $G=(g_1,...,g_m)$ with the attractor $A=A_G$ then the following associated {\bf inverse map (dynamical system)} $\varphi=\varphi_G: A\to A$ is well-defined:
$$\varphi|_{g_i(A)}=g_i^{-1}, i=1,....m.$$
If $(G, V)$ is a box-like restriction of $(G, \Omega)$ then $\varphi: A\to A$ extends to a holomorphic $m$-covering map of $U=\cup_{i\in\Lambda}(V)$ onto $V$ which we call a {\bf complex inverse map} $\varphi: U\to V$.
(In Example 2 above the whole IFS $(G, \Omega)$ is a box-like while $\varphi: g_1(\Omega)\cup g_2(\Omega)\to\Omega$ is the complex inverse map.)


Recall that, given a map $f:X\to X'$, a point $x\in X$ is pre-periodic if $f^p(x)=f^q(x)$ for some minimal $0\le p < q$ (assuming all iterates
$f(x),...,f^q(x)$ are well-defined); in other words, $x$ is pre-periodic if and only if the sequence of iterates $\{f^n(x)\}_{n\ge 0}$ is well-defined
and is a finite set. (Note that $p=0$ corresponds to the case when $x$ is a periodic point of $f$.)

\begin{lemma}\label{l-ifsinv}
\begin{enumerate}\label{repel}
\item $\varphi:A\to A$ has degree $m$; for every $n$ and every ${\bf w}\in\Lambda^n$,
$$\varphi^n|_{g_{\bf w}(A)}=g_{\bf w}^{-1},$$
in particular, $\varphi^n(x)=x$  if and only if $x$ is a fixed point of $g_{\bf w}$ for some ${\bf w}\in\Lambda^n$,
\item suppose the IFS $(G,\Omega)$ is holomorphic. Then there exists a box-like restriction $(G, V)$ of $(G, \Omega)$ such that
each connected component of $V$ intersects $A$; moreover, for every $\eps>0$ one can choose the domain $V$ such that every component of $V$ has the Euclidean diameter less than $\eps$,
\item if $\varphi:U\to V$ is a complex inverse map, then $A=\{x\in U: \varphi^n(x)\in U, n=0,1,...\}$.
In particular, any pre-periodic point of $\varphi:U\to V$ lies in $A$ and $A$ is the closure of the set of all periodic points of $\varphi:U\to V$.
\end{enumerate}
\end{lemma}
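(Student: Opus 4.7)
The plan is to prove the three parts in order, with the construction in part (2) carrying the technical weight. For part (1), the SSC gives a disjoint decomposition $A=\bigsqcup_{i\in\Lambda}g_i(A)$, making $\varphi:A\to A$ well-defined and of degree $m$ (each $x\in A$ has exactly the $m$ distinct preimages $\{g_i(x):i\in\Lambda\}$). Iterating, $A=\bigsqcup_{{\bf w}\in\Lambda^n}g_{\bf w}(A)$ for every $n$, by induction using injectivity of the $g_i$'s. For $x\in g_{\bf w}(A)$ with ${\bf w}=(i_1,\ldots,i_n)$, peeling off one letter gives $\varphi(x)=g_{i_1}^{-1}(x)\in g_{i_2\cdots i_n}(A)$, and iterating yields $\varphi^n|_{g_{\bf w}(A)}=g_{\bf w}^{-1}$. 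The equivalence $\varphi^n(x)=x\Leftrightarrow g_{\bf w}(x)=x$ then follows from the uniqueness of the cell $g_{\bf w}(A)$ containing $x$.

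For part (2), the strategy is to realize $V$ as a deep level of an ``itinerary tree''. First pick a connected open neighborhood $U_0$ of $A$ with $\overline{U_0}\subset\Omega$, $\overline{g_i(U_0)}\subset U_0$ for every $i$, and the sets $\{\overline{g_i(U_0)}\}_{i\in\Lambda}$ pairwise disjoint: such a $U_0$ exists by combining that $A$ is the attractor with the SSC for the compacts $g_i(A)$. Put $V_k:=\bigcup_{{\bf w}\in\Lambda^k}g_{\bf w}(U_0)$. An induction on $k$ using injectivity of each $g_i$ and the disjointness base case shows the union is disjoint, so the components of $V_k$ are precisely the $g_{\bf w}(U_0)$, each meeting $A$ at $g_{\bf w}(A)$. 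The inclusions $g_i(V_k)\subset V_{k+1}\subset V_k$ follow from $g_j(U_0)\subset U_0$; moreover, $\overline{g_i(V_k)}$ splits as a disjoint union of compacts $g_{i{\bf w}}(\overline{U_0})\subset g_{iw_1\cdots w_{k-1}}(U_0)$, each lying inside a single component of $V_k$, and the full closures $\{\overline{g_i(V_k)}\}_{i\in\Lambda}$ are pairwise disjoint since they sit inside the pairwise disjoint $\overline{g_i(U_0)}$. Finally, for $k$ large every $\diam(g_{\bf w}(U_0))<\eps$, by the uniform bound $|g'_{\bf w}(z)|\le Cs^k$ on $\overline{U_0}$ recorded in Subsection \ref{ss-def}.

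For part (3), the inclusion $A\subset\{x\in U:\varphi^n(x)\in U\ \forall n\}$ is immediate: $A\subset\bigsqcup_i g_i(V)=U$ (using $A\subset V$ and the SSC) and $\varphi(A)=A$. Conversely, given $x\in U$ with $\varphi^n(x)\in U$ for all $n$, the itinerary $(i_1,i_2,\ldots)\in\Lambda^{\N}$ determined by $\varphi^{k-1}(x)\in g_{i_k}(V)$ satisfies $x=g_{i_1\cdots i_n}(\varphi^n(x))\in g_{i_1\cdots i_n}(V)$ for every $n$; by uniform contraction the nested compacts $g_{i_1\cdots i_n}(\overline{V})$ shrink to a single point, which lies in $A$ (as the limit of $g_{i_1\cdots i_n}(A)$) and must equal $x$. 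Pre-periodic points of $\varphi:U\to V$ have finite orbits contained in $U$, so by this criterion they lie in $A$; combined with part (1), the periodic points of $\varphi$ are exactly the fixed points of the $g_{\bf w}$'s, which are dense in $A$ by ($\star$), giving the closure statement.

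The main obstacle is the bookkeeping in part (2): arranging all four requirements of a box-like restriction simultaneously (components meeting $A$, small Euclidean diameter, $\overline{g_i(V)}\subset V$, and pairwise disjointness of $\{\overline{g_i(V)}\}_i$) from one nested family, and verifying the inductive disjointness of the $V_k$. Once this is in place, parts (1) and (3) are routine symbolic-dynamics arguments.
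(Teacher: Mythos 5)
Your parts (1) and (3) are fine and agree with the paper's treatment: the paper declares (1) immediate, and proves (3) via the identity $\{x:\varphi^n(x)\in U,\ n\ge 1\}=\bigcap_{n=1}^\infty\bigcup_{{\bf w}\in\Lambda^n}g_{\bf w}(V)$, which is exactly what your nested-cylinder argument establishes. The genuine gap is at the very first step of part (2), the construction of $U_0$. You demand that $U_0$ be \emph{connected} while also satisfying $\overline{g_i(U_0)}\subset U_0$ with the closures $\overline{g_i(U_0)}$ pairwise disjoint, and you justify its existence only by appeal to ``$A$ is the attractor plus the SSC.'' This hides the two real difficulties. First, connectedness is unjustified and, as far as I can see, not achievable in general: any neighborhood of the Cantor set $A$ small enough for the images to have disjoint closures is typically disconnected, and connecting its components by tubes forces those tubes through regions far from $A$, where nothing prevents $g_1(\mbox{tube})$ from meeting $g_2(U_0)$. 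It is also unnecessary for the lemma. Second, even dropping connectedness, the inclusion $\overline{g_i(U_0)}\subset U_0$ is not automatic for a Euclidean $\eps$-neighborhood of $A$, because an individual $g_i$ need not contract the Euclidean metric near $A$ — only sufficiently long compositions do, as noted at the end of Subsection \ref{ss-def}.

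The paper's proof resolves both points at once by working in the hyperbolic metric: take $U_0=V_G=\{z\in\Omega:d_\Omega(z,A)<\sigma\}$ with $\sigma<\tfrac12\min_{i\neq j}d_\Omega(g_i(A),g_j(A))$. By the Schwarz lemma every $g_i$ is a strict $d_\Omega$-contraction, so $g_i(V_G)$ lies in the hyperbolic $\sigma$-neighborhood of $g_i(A)$; this gives forward invariance and pairwise disjointness of the closures simultaneously, and a short geodesic argument shows every component of $V_G$ meets $A$. With this (disconnected) $U_0$ your iteration $V_k=\bigcup_{{\bf w}\in\Lambda^k}g_{\bf w}(U_0)$ goes through verbatim, except that the components of $V_k$ are the sets $g_{\bf w}(C)$ for components $C$ of $U_0$ rather than the sets $g_{\bf w}(U_0)$ themselves; each still meets $A$ and still has small diameter for large $k$. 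With that substitution your argument becomes the paper's.
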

Item (1) is immediate. For (2), we can let $V=V_G:=\{z\in\Omega: d_\Omega(z,A)<\sigma\}$ be the $\sigma$-neighborhood of $A$ w.r.t. the
hyperbolic metric $d_\Omega$ on $\Omega$, for $\sigma>0$ smaller than $1/2 \min\{d_{\Omega}(g_i(A),g_j(A)): i\neq j\}$.
Then $V$ is as required because all $g_i$ are contractions in the metric $d_\Omega$. Now, for every $n\in\N$, if $V_n:=\cup_{{\bf w}\in\Lambda^n}g_{\bf w}(V)$, then $(G, V_n)$ is again a box-like restriction.
On the other hand, as $g_{\bf w}(x)\to A$ as $|{\bf w}|\to\infty$ uniformly in $x\in\Omega_G$ and since $A$ is a Cantor set,
given $\eps>0$, there exists $n$ such that the diameter of each component of $V_n$ will be less than $\eps$. This proved (2).
(3) follows because
$$\{x: \varphi^n(x)\in U, n=1,2,...\}=\cap_{n=1}^\infty\cup_{{\bf w}\in\Lambda^n} g_{\bf w}(V).$$
\subsection{Symmetries of the attractor under separation conditions}\label{ss-symm}
It is easy to see that for a holomorphic IFS $(G, \Omega_G)$ with the SSC, there exists $\delta>0$ such that any finite composition $g_{\bf w}$ is a local symmetry of the attractor $A$ on any ball $B(a, \delta)$, $a\in A$.

Let us introduce a radius $\rho_G$ which quantifies this property. First, let $\rho_h>0$ be the minimal hyperbolic (i.e. $d_\Omega$-)distance between all (pairwise disjoint) compact sets $g_k(A)$.
Then, for any $a\in A$ and the hyperbolic ball $B_h(a,\rho_h):=\{z: d_\Omega(z,a)<\rho_h\}$, all $g_{\bf w}$ are symmetries on $B_h(a,\rho_h)$.
Indeed, for each $g_i$, $i\in\Lambda$, as it contracts the hyperbolic metric, $g_i(B_h(a,\rho_h))\subset B_h(g_i(a), \rho_h)$
while, by the SSC and the definition of $\rho_h$, the letter ball is disjoint with all other $g_j(A)$, $j\neq i$. As each $g_i:\Omega\to g_i(\Omega)$ is a homeomorphism and by the SSC, $x\in A$ if and only if $g_i(x)\in A$. Therefore, every $g_i$
is a local holomorphic symmetry on each hyperbolic ball $B_h(a, \rho_h)$ and the claim follows by induction along with the contraction of maps $g_i$ w.r.t. $d_\Omega$. Then we can take $\rho_G$ to be the maximal $r>0$ such that the Euclidean ball
$B(a, r)$ is contained in the hyperbolic ball $B_h(a, \rho_h)$, for all $a\in A$.

If all maps $g_k$ of $G$ happen to be linear contractions, one can simply define
$\rho_L$ to be the minimal Euclidean distance between all compact sets $g_k(A)$. Then all maps $g_{\bf w}$
will be local symmetries of $A$ on every Euclidean ball $B(a, \rho_L)$, $a\in A$.

The following claim is crucial for the rest of the paper.
\begin{lemma}\label{l-attrjlike}
Let $A$ be the attractor of a holomorphic IFS $(G, \Omega)$ where $G=(g_1,...,g_m)$.

1. Assume the SSC holds. Then $A$ is a J-like Cantor set.

2. Assume there exists an open set $U\subset \Omega$ such that $A\subset\overline{U}$, $A\cap U\neq\emptyset$ and $g_i(U)$ for $i=1,...,m$ are pairwise disjoint\footnote{Inclusions $g_i(U)\subset U$ are not assumed here.}. Then every $g_i$ is a symmetry of $A$ on each component $D$ of $U$ that intersects $A$.

3. Assume that the strong OSC holds with a set $O$.
Then any finite composition $g_{\bf w}$ of $g_i$'s is a symmetry of $A$ on each component $D$ of $O$ and $A$ is J-like relative to the domain $D$.
\end{lemma}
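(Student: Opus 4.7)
The plan is to prove the parts in the order (2), (3), (1), since Part (2) contains the essential topological observation, Part (3) lifts it to arbitrary compositions and to the $J$-like property, and Part (1) then follows as a special case. For Part (2), fix a component $D$ of $U$ with $D\cap A\neq\emptyset$ and an index $i$. The implication $x\in D\cap A\Rightarrow g_i(x)\in g_i(D)\cap A$ is immediate from $g_i(A)\subset A$. For the converse, suppose $x\in D$ with $g_i(x)\in A$. Since $A=\bigcup_{j} g_j(A)$, there is $j$ with $g_i(x)\in g_j(A)$; I claim $j=i$. The open sets $g_i(U)$ and $g_j(U)$ are disjoint for $j\neq i$, and since $g_i(U)$ is open this upgrades to $g_i(U)\cap\overline{g_j(U)}=\emptyset$; together with $g_j(A)\subset g_j(\overline{U})\subset\overline{g_j(U)}$ (from $A\subset\overline{U}$), this shows $g_i(D)\subset g_i(U)$ cannot meet $g_j(A)$ unless $j=i$. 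With $j=i$ we conclude $g_i(x)=g_i(y)$ for some $y\in A$, and injectivity of $g_i$ forces $x=y\in A$.

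For Part (3), the strong OSC together with the invariance $g_i(O)\subset O$ gives by an easy induction on $n$ that, for every $n$, the sets $\{g_{\bf v}(O):{\bf v}\in\Lambda^n\}$ are pairwise disjoint subsets of $O$, while $A=\bigcup_{{\bf v}\in\Lambda^n}g_{\bf v}(A)$ follows from iterating the fixed point equation. Repeating the argument of Part (2) verbatim with the collection $\{g_{\bf w}\}_{{\bf w}\in\Lambda^n}$ in place of $\{g_1,\dots,g_m\}$ and $U=O$ then shows that every $g_{\bf w}$ is a symmetry of $A$ on each component $D$ of $O$ meeting $A$. For $J$-likeness relative to such a $D$, given $a\in D\cap A$ and an open neighborhood $V\subset D$ of $a$, the density of ${\bf Per}_G$ in $A$ recorded in $(\star)$ yields a word ${\bf w}$ whose fixed point $\beta_{\bf w}$ lies in $V$. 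Then $g_{\bf w}$ is a local symmetry of $A$ on $D$, and $\beta_{\bf w}$ is an attracting fixed point: by the strong form of the Schwarz--Pick lemma (using $\overline{g_i(\Omega)}\subset\Omega$), $g_{\bf w}:\Omega\to\Omega$ is a strict hyperbolic contraction, and at a fixed point the hyperbolic and Euclidean multipliers coincide, so $|g_{\bf w}'(\beta_{\bf w})|<1$; univalence gives $|g_{\bf w}'(\beta_{\bf w})|>0$. Hence attracting fixed points of local symmetries of $A$ are dense in $D\cap A$.

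Part (1) is then the special case where $O$ is a sufficiently small open neighborhood of $A$, which exists by the implication SSC $\Rightarrow$ strong OSC stated in the text preceding the lemma; combined with the already recorded fact that the attractor of an SSC IFS with at least two maps is a Cantor set with no isolated points, Part (3) gives that $A$ is a $J$-like Cantor set.

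The only delicate point, and hence what I regard as the main obstacle, is the topological upgrade $g_i(U)\cap g_j(U)=\emptyset\Rightarrow g_i(U)\cap\overline{g_j(U)}=\emptyset$ in Part (2), which rests only on $g_i(U)$ being an open subset of the closed set $g_j(U)^c$; everything else---the extension to arbitrary compositions, the density of periodic points, and the attracting nature of the multipliers---is routine bookkeeping using facts established earlier in the paper.
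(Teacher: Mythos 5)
Your proof is correct, and its core ingredients are exactly those of the paper: the topological upgrade from $g_i(U)\cap g_j(U)=\emptyset$ to $g_i(U)\cap g_j(\overline{U})=\emptyset$ (the paper's proof of part 2 is word-for-word this argument), the density of the fixed points $\beta_{\bf w}$ from $(\star)$, and the Schwarz-lemma fact that each $\beta_{\bf w}$ is attracting. Where you differ is in the organization. For part 3 the paper does not pass to the $n$-th power of the IFS; instead it builds the decreasing chain $U_0=O\supset U_1=G(U_0)\supset\cdots$, checks that each $U_r$ satisfies the hypotheses of part 2, and then writes $g_{\bf w}=g_{i_1}\circ\cdots\circ g_{i_k}$ as a composition of symmetries along the chain ($g_{i_k}$ on $D$, then $g_{i_{k-1}}$ on $g_{i_k}(D)\subset U_1$, etc.); your disjointness-of-$\{g_{\bf v}(O)\}_{{\bf v}\in\Lambda^n}$ induction is an equally valid and arguably cleaner packaging of the same reduction to part 2. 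For part 1 the paper argues directly, using the uniform radius $\rho_G$ constructed in Subsection \ref{ss-symm} via hyperbolic balls, so that every $g_{\bf w}$ is a symmetry on the fixed-size ball $B(\beta_{\bf w},\rho_G)$; you instead route part 1 through part 3 via the implication SSC $\Rightarrow$ strong OSC with $O$ a small neighborhood of $A$. Both routes are legitimate; the paper's direct version has the side benefit of producing symmetries on balls of a uniform Euclidean radius, which is the form actually reused later (e.g.\ in Lemma \ref{l-symmifs}), whereas your derivation is more economical in that it makes part 1 a formal consequence of part 3. The only cosmetic caveat is that in part 3 a ``symmetry on a component $D$ of $O$'' is only defined when $D$ meets $A$ (Definition \ref{d-symm}), so your restriction to such components is consistent with the intended reading.
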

\begin{proof}


1. Assuming the SSC, every $g_{\bf w}$ is a local symmetry of $A$ on the ball
$B(\beta_{\bf w}, \rho_G)$. By ($\star$), points $\beta_{\bf w}$ are dense in $A$. Hence, $A$ is J-like. $A$ is totally disconnected  because all $g_i$ are strict contractions and the SSC holds.

2.
As $A\subset \overline{U}$ and $g_1(U)$ is disjoint with
all others $g_j(U)$, then $g_1(U)$ is disjoint with $g_j(\overline{U})$, hence, with $g_j(A)$, $j=2,...,m$. Therefore, $g_1(D)\cap A=g_1(D\cap A)$, which implies that $g_1$ is a symmetry on $D$.

3. Define $U_0=O, U_1=G(U_0),..., U_{k-1}=G(U_{k-2})$ where $G(E):=\cup_{i=1}^m g_u(E)$. Note that $U_0\supset...\supset U_{k-1}$.
Then every open set $U_r$, $r=0,...,k-1$,
satisfies the conditions of the open set $U$ of part 2. Therefore, all $g_i$ are symmetries on any component of any $U_r$.
Now, let $g_{\bf w}=g_{i_1}\circ...\circ g_{i_k}$.
Then $g_{i_k}$ is a symmetry on $D$, $g_{i_{k-1}}$ is a symmetry on $g_{i_k}(D)$ etc. This shows that $g_{\bf w}$ is a symmetry on $D$. The fact ($\star$) finishes the proof that $A$ is J-like relative to the domain $D$.
\end{proof}
Note however that any infinite sequence of such pairwise different maps $g_{\bf w}$ is a trivial family of symmetries because all their limit functions are constants (points of the attractor).


\subsection{Main results}

First, we study how many IFS can share the same attractor. Of course all iterates of a given IFS have the same attractor so we restrict ourself to classes $\mathcal{G}$ of IFS $G$ such that
\begin{equation}\label{sup}
\sup\{\# G: G\in\mathcal{G}\}<\infty.
\end{equation}


\begin{theorem}\label{t-finite}
Suppose that $A\subset\C$ is a compact without isolated points, $V$ is a connected neighborhood of $A$,
$U$ is an open set such that $A\subset\overline{U}\subset V$, $A\cap U\neq \emptyset$, finally, $A$ is not laminar relative to
at least one component of $U$ meeting $A$.
Let $\mathcal{G}$ be a collection of
holomorphic IFS $(G, \Omega_G)$ that satisfy condition (\ref{sup}) as well as the following conditions:
\begin{enumerate}
\item $A_G=A$ and $V\subset \Omega_G$,
\item for every $G\in\mathcal{G}$, the sets $g(U)$ for all $g\in G$ are pairwise disjoint,
\item there exists $G_*\in\mathcal{G}$ such that either $G_*$ satisfies the SSC or $\cup_{g\in G_*}g(U)\subset U$.
\end{enumerate}
Then the collection
$$\{g|_V: g\in G \mbox{ for some } G\in\mathcal{G}\}$$
contains at most finitely many different functions.
\end{theorem}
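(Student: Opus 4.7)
By hypothesis~(3) and Lemma~\ref{l-attrjlike}(1,3), $A$ is $J$-like relative to every component of $U$ meeting $A$; by hypothesis~(2) and Lemma~\ref{l-attrjlike}(2), every $g$ in any $G\in\mathcal{G}$ is a local symmetry of $A$ on every such component. Fix a component $D_0$ of $U$ meeting $A$ relative to which $A$ is non-laminar. The plan is to suppose $\mathcal{F}:=\{g|_V:g\in G,\ G\in\mathcal{G}\}$ is infinite and produce a non-trivial infinite family of symmetries of $A$ on $D_0$, contradicting Theorem~\ref{t-symm}.

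\textbf{Normality of $\mathcal{F}$.} Each $g\in\mathcal{F}$ is univalent on $V$ with $g(A)\subset A$. Since $\#A\geq 2$ and $A$ has no isolated points, $\diam A>0$; picking $z_1,z_2\in A$ with $|z_1-z_2|\geq\tfrac12\diam A$ and using $|g(z_1)-g(z_2)|\leq\diam A$, Koebe distortion on a disk in $V$ containing $z_1,z_2$ bounds $|g'|$ at an interior point uniformly in $g$. Propagating along a finite chain of disks in the connected open set $V$, $|g|$ is uniformly bounded on every compact subset of $V$, so $\mathcal{F}$ is normal by Montel's theorem.

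\textbf{Extracting a non-trivial family.} Assume $\mathcal{F}$ is infinite and pick distinct $f_k\in G_k$. After pigeonhole on $\#G_k\leq m$ and diagonal extraction, assume $\#G_k=m_0$ is constant and write $G_k=\{g_k^{(1)}=f_k,\ldots,g_k^{(m_0)}\}$ with $g_k^{(i)}\to h^{(i)}$ uniformly on compacts of $V$. Taking Hausdorff limits in $A=\bigcup_i g_k^{(i)}(A)$ yields $A=\bigcup_i h^{(i)}(A)$, so not all $h^{(i)}$ can be constants (else $A$ is finite, contradicting that $A$ has no isolated points). If $h^{(1)}$ is non-constant, $\{f_k|_{D_0}\}$ converges uniformly to the non-constant $h^{(1)}$, is non-trivial in the sense of Definition~\ref{d-symm}, and Theorem~\ref{t-symm} applied to $D_0$ forces it finite, contradicting the distinctness of the $f_k$. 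If $h^{(1)}$ is constant, a further pigeonhole on the remaining coordinates either produces this same contradiction for some $i\geq 2$ (with infinitely many distinct $g_k^{(i)}$ and a non-constant limit $h^{(i)}$), or reduces to the degenerate configuration $G_k=\{f_k\}\cup H$ with $H=\{h^{(2)},\ldots,h^{(m_0)}\}$ a fixed finite set of univalent maps and $f_k\to c$ uniformly on $\overline U$ for some $c\in A$.

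\textbf{The main obstacle: the degenerate case.} Taking Hausdorff limits in $A=f_k(A)\cup\bigcup_{h\in H}h(A)$ gives $A=\{c\}\cup\bigcup_{h\in H}h(A)$; since $A$ has no isolated points, $c$ lies in $\bigcup_{h\in H}h(A)$ and thus $A=\bigcup_{h\in H}h(A)$, so $c=h(a_*)$ for some $h\in H$ and $a_*\in A$. The disjointness $f_k(U)\cap h(U)=\emptyset$ (with $h$ fixed in $k$), combined with $f_k(\overline U)\subset B(c,\epsilon_k)$ where $\epsilon_k\to 0$, yields a contradiction once $c$ is known to lie in the \emph{open} set $h(U)$: for then $B(c,\delta)\subset h(U)$ for some $\delta>0$, and for $k$ large the non-empty $f_k(U)$ satisfies $f_k(U)\subset B(c,\epsilon_k)\subset h(U)$, contradicting the disjointness. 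To arrange $c\in h(U)$, equivalently $a_*\in U$, I would invoke density of $A\cap U$ in $A$: in the strong OSC subcase of hypothesis~(3) this is immediate, since iterating $G_*$ from a point $a_0\in A\cap U$ gives points $g_{\mathbf w}(a_0)\in A\cap U$ whose union is dense in $A$; in the SSC subcase a similar density is derived from the Cantor cylinder structure of $A$ together with $A\cap U\neq\emptyset$. Density then lets us replace any $a_*\in\partial U$ by a nearby $a_*'\in A\cap U$ with $h(a_*')\in h(U)$ arbitrarily close to $c$, and the argument goes through.
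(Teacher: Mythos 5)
Your overall strategy coincides with the paper's: normality of the family via Koebe distortion, each $g$ being a symmetry of $A$ on a non-laminar component of $U$ via Lemma~\ref{l-attrjlike}, finiteness of non-trivial families via Theorem~\ref{t-symm}, and the disjointness of the sets $g(U)$ to rule out constant limits. The gap is in your ``degenerate case,'' which is precisely the crux of the proof. There are two problems. First, the reduction to $G_k=\{f_k\}\cup H$ with $H$ a \emph{fixed} finite set is not exhaustive: several coordinates may simultaneously run through infinitely many distinct maps converging to (possibly different) constants, and such coordinates cannot be stabilized by pigeonhole, so the residual configuration has a whole set $S$ of degenerating coordinates, not just one. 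Second, and more seriously, the proposed contradiction does not close. Density of $A\cap U$ in $A$ is not established: it does hold in the strong OSC subcase, but in the SSC subcase $U$ is an essentially arbitrary open set with $A\subset\overline{U}$ and $A\cap U\neq\emptyset$, and nothing forces $A\cap U$ to be dense in $A$. Even granting density, replacing $a_*\in\partial U$ by a nearby $a_*'\in A\cap U$ only produces points $h(a_*')\in h(U)$ \emph{close to} $c$; this does not force the small nonempty open set $f_k(U)\subset B(c,\epsilon_k)$ to meet $h(U)$, since two open sets can be arbitrarily close without intersecting. What you actually need is $c\in h(U)$ itself, and you cannot arrange that.

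The correct argument runs in the opposite direction, and is what the paper does. Since $f_k(U)$ is open and disjoint from $h(U)$ for each fixed $h\in H$ (as $h=g_k^{(i)}$ for all large $k$), it is disjoint from $\overline{h(U)}\supset h(\overline{U})\supset h(A)$. Once one knows $A=\bigcup_{h\in H}h(A)$ --- which follows because the degenerating coordinates cover the relatively open set $A\setminus\bigcup_{h\in H}h(A)$ by finitely many sets of vanishing diameter, impossible unless that set is empty since $A$ is perfect --- one gets $f_k(U)\cap A=\emptyset$, hence $f_k(A\cap U)=\emptyset$, contradicting $A\cap U\neq\emptyset$. No density and no location of the limit point $c$ are needed. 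The paper packages this as an induction: at each step it selects a coordinate whose images must cover the nonempty leftover $A_j=A\setminus\bigl(g_1(A)\cup\dots\cup g_j(A)\bigr)$ and therefore cannot degenerate, stabilizes it along a subsequence by Theorem~\ref{t-symm}, and shows $A_{j+1}\neq\emptyset$ using exactly the disjointness of the remaining $g_i^{(n)}(U)$ from $g_1(\overline{U})\cup\dots\cup g_j(\overline{U})$; after $m$ steps the whole IFS is constant along a subsequence, contradicting the assumed degeneration. You should either adopt this inductive scheme or replace the final step of your degenerate case by the $\overline{h(U)}$ argument above.
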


\begin{remark}\label{after-t-finite}


Conditions (2)+(3) can be replaced by the following single (though stronger in general) condition:
all $G\in \mathcal{G}$ satisfy the strong OSC with the same open set.

\end{remark}

Let us call a holomorphic IFS $(G,\Omega_G)$ {\it real} if the domain $\Omega$ is symmetric w.r.t. $\R$ and
$g(\bar z)=\overline{g(z)}$ for all $g\in G$ and $z\in\Omega$.

\begin{corollary}\label{c-finite}

Given a Cantor set $A\subset\C$ and a connected neighborhood $V$ of $A$,
let $\mathcal{G}$ be a collection of
holomorphic IFS $(G, \Omega_G)$ as follows:
\begin{enumerate}
\item $A_G=A$ and $V\subset \Omega_G$,
\item either of the two conditions holds:

(a) there exists $r>0$ such that for every $G\in\mathcal{G}$ and
any two different $g, \tilde g\in G$, $\dist(g(A), \tilde g(A))\ge r$ (in particular, the SSC holds),

(b) $A\subset \R$, $A\subset V\cap\R$ and $\mathcal{G}$ consists of real IFS that satisfy the SSC; furthermore,
condition (\ref{sup}) holds.
\end{enumerate}
Then the collection
$$\{g|_V: g\in G \mbox{ for some } G\in\mathcal{G}\}$$
is finite.
\end{corollary}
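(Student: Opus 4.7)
The plan is to deduce Corollary~\ref{c-finite} from Theorem~\ref{t-finite} by exhibiting a single open $U\subset V$ that fulfills the hypotheses of the theorem uniformly over $\mathcal{G}$. Non-laminarity is automatic: since $A$ is a Cantor set it is totally disconnected and hence non-laminar relative to every domain (remark following Theorem~\ref{t-symm}), and condition~(1) of Theorem~\ref{t-finite} is given in the corollary.

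Under~(a) the uniform lower bound $\dist(g(A),\tilde g(A))\ge r$ forces the SSC for every $G\in\mathcal{G}$, so condition~(3) of Theorem~\ref{t-finite} is satisfied by any choice of $G_*$; picking one point from each $g(A)$ yields $\#G$ pairwise $r$-separated points in the compact set $A$, so $\#G$ is bounded by the $r$-packing number of $A$, establishing~(\ref{sup}). To construct $U$ I would fix any $G_*\in\mathcal{G}$ and invoke Lemma~\ref{l-ifsinv}(2) to take a box-like restriction $(G_*,V_*)$ of $G_*$ with $\overline{V_*}\subset V$ and with every component of Euclidean diameter less than a small $\eta>0$, and then set $U:=V_*$. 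For any $G\in\mathcal{G}$, any $g\in G$, and any component $C$ of $U$, the image $g(C)$ is connected and meets $g(A)$; provided $\diam g(C)<r/3$, one has $g(U)\subset\{z:\dist(z,g(A))<r/3\}$, and these $r/3$-neighborhoods are pairwise disjoint over distinct $g\in G$ by~(a). The required uniform bound $\diam g(C)\le L\eta$ with $L$ independent of $G$ would come from Koebe's distortion theorem applied to $g$ univalent on a fixed family of disks contained in $V$ centered at points of $A$, exploiting that $g(A)\subset A$ is trapped in the fixed compact $A$ while $\dist(A,\partial V)>0$ is a $G$-independent quantity; choosing $\eta<r/(3L)$ then delivers condition~(2).

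Under~(b) the SSC and~(\ref{sup}) are assumed directly, so again condition~(3) holds for any $G_*\in\mathcal{G}$. For $U$ I would use the reality constraint $g(\bar z)=\overline{g(z)}$ and take a thin $\R$-symmetric tubular neighborhood $\{z\in V:\dist(z,A)<\delta\}$ of $A\subset\R$; the real symmetry ensures $g(U)$ is symmetric about $\R$ with width controlled by $\delta$ times the same Koebe-type derivative bound on $A$, while the SSC of $G$ and the sup-bound $\#G\le M$ permit fixing a single $\delta>0$ for which $\{g(U):g\in G\}$ is pairwise disjoint for every $G\in\mathcal{G}$. The principal obstacle in both cases is extracting a Koebe distortion constant that is uniform in $G$: since the target domains $\Omega_G$ and their hyperbolic metrics $d_{\Omega_G}$ governing the contractions vary with $G$, the uniform bound must be obtained from the fixed data $(A,V)$ alone, using only that each $g$ is univalent on $V$ and carries the fixed compact $A$ into itself; this is the step that makes the reduction to Theorem~\ref{t-finite} genuinely uniform in $\mathcal{G}$.
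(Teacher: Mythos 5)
Your treatment of case (2a) is essentially the paper's argument: the paper takes $U$ to be the $r/S$-neighborhood of $A$, where $S$ is the uniform Koebe-type bound $|g'|\le S$ on a compact neighborhood $\overline{V'}\subset V$ (obtained exactly as you describe, from univalence on the fixed $V$ together with $g(A)\subset A$, as in the proof of (\ref{bd0}) in Lemma \ref{l-conf}), and gets $\#G$ bounded by the packing estimate (\ref{2a}). Your box-like $V_*$ with small components is a cosmetic variant of the same idea and works.

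Case (2b) is where your proposal has a genuine gap. You claim that ``the SSC of $G$ and the sup-bound $\#G\le M$ permit fixing a single $\delta>0$'' for which the images of the tubular neighborhood $\{z\in V:\dist(z,A)<\delta\}$ are pairwise disjoint. But the SSC only gives a separation constant $r_G=\min_{g\neq\tilde g}\dist(g(A),\tilde g(A))>0$ depending on $G$, and in case (2b) there is no hypothesis forcing $\inf_{G\in\mathcal{G}}r_G>0$: the pieces $g(A),\tilde g(A)$ of different $G$'s may be separated only by arbitrarily small gaps of the Cantor set $A$. For any fixed $\delta$, Koebe gives $g(U)\subset\{z:\dist(z,g(A))<S\delta\}$, and once $r_G<2S\delta$ disjointness fails; this is precisely why (2a) assumes a uniform $r$ while (2b) does not, so your argument collapses (2b) into (2a). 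The paper's proof of (2b) instead uses the lens domains $\Omega_\theta(I)$ and Lemma \ref{l-sull}: a real univalent $g$ maps $\Omega_\theta(I)$ into $\Omega_{\theta'}(g(I))$ with $\theta'<\pi/2$, and two domains $\Omega_{\theta'}(J)$, $\Omega_{\theta'}(\tilde J)$ over disjoint real intervals are automatically disjoint (each lies in the disk with diameter its base interval), \emph{regardless of how close $J$ and $\tilde J$ are}. Taking $U=\Omega_\theta(I)$ for $I$ the convex hull of $A$ then yields condition (2) of Theorem \ref{t-finite} with no quantitative separation. This hyperbolic/angle argument is the key idea your proposal is missing; without it, or without proving a uniform lower bound on $r_G$ over $\mathcal{G}$ (which is not available from the stated hypotheses), case (2b) does not go through.
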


\begin{remark}\label{r-colo-2b}
The requirement in (2b) that all $G$ of the class $\mathcal{G}$ satisfy the SSC can be weakened as follows:
there is an open set $O_\R$ of the real line
such that all $G\in\mathcal{G}$ being restricted
to $\R$ satisfy
the strong OSC with the same set $O_\R$. In this case, choose $\theta>0$ small and let $U$ to be the union
of $\Omega_\theta(J)$ over all connected components $J$ of $O_\R$, see Subsection \ref{ss-s} for definition of $\Omega_\theta(J)$. Then the proof of Corollary \ref{c-finite} as in Sect \ref{proofs} holds with this new $U$.

\end{remark}
By a simple area consideration, it is easy to see that
for every $G$ as in item (2a) of Corollary \ref{c-finite},
\begin{equation}\label{2a}
\# G\le \left(\frac{1+r/(2\diam A)}{r/(2\diam A)}\right)^2.
\end{equation}
So it would be interesting to know if the condition (2a) can be replaced only by the SSC and the bound (\ref{sup})\footnote{Probably, not}.

\

All other main results are about relations between IFS sharing an attractor, cf. \cite{le}, \cite{lp}.
The following is a central theorem of the paper. It states three conditions (I)-(III) where (I) is that two holomorphic IFS $G$ and $F$ with the SSC have the same attractor and (II)-(III) are equivalent to (I) and are stated via functional equations between maps of $G$ and $F$. Note that (II) consists of {\it infinitely many} functional equations that hold
on a neighborhood of a single point while (III) consists of only {\it finitely many} functional equations that hold altogether on some neighborhoods of attractors.
\begin{theorem}\label{t-symmifs}
Let $(G, \Omega_G)$, $(F, \Omega_F)$  be two finite holomorphic IFS on $\C$ with the SSC
where
$G=\{g_i\}_{i\in\Lambda}$ and
$F=\{f_j\}_{j\in\tilde\Lambda}$.
The following conditions (I)-(III) are equivalent:
\begin{enumerate}
\item [(I)] $G$ and $F$ share the same attractor $A=A_G=A_F$,
\item [(II)]

there exist a finite collection of words ${\bf w^k}\in\Lambda^*$, ${\bf v^k}\in\tilde\Lambda^*$,
$k=1,2,...,k_0$, for some $k_0\in\N$, and there exists $m_0\in\N$
such that the following infinite sequences of functions equations (G)-(F) are satisfied:

(G) for some ball $B_G\subset\Omega_G$, for every $m\ge m_0$ and every ${\bf w}\in\Lambda^m$ there exist ${\bf v}\in\tilde\Lambda^*$ and an index $k\in\{1,...,k_0\}$ such that
\begin{equation}\label{func}
f_{\bf v}\circ f_{{\bf v^k}}^{-1}=g_{\bf w}\circ g_{{\bf w^k}}^{-1} \mbox{ on } g_{{\bf w^k}}(B_G),
\end{equation}
and both hand-sides are well-defined.

(F) the same holds after switching $G$ and $F$ (and obvious change of notations).

Moreover, if $A=A_G=A_F$ then one can choose $B_G=B_F$ to be a neighborhood of a point of $A$.

\item [(III)] there exist: (i) box-like restrictions $(G, V_G)$ and $(F, V_F)$ of $(G, \Omega_G)$ and $(F, \Omega_F)$ respectively (see Subsection \ref{ss-sep} for definition),
(ii) a finite collection of words ${\bf t^k}\in\Lambda^*$, ${\bf u^k}\in\tilde\Lambda^*$,
$k\in\{1,...,K\}$ for some $K\in\N$, and some $M\in\N$ as follows. Let $M_*=\max\{|{\bf t^k}|, |{\bf u^k}|: k=1,...,K\}$. Then $M\ge M_*+1$
and the following finite number of functional equations
($G_{fin}$)-($F_{fin}$) are satisfied:

($G_{fin}$) for every component $D$ of $V_G$ and every ${\bf t}\in\Lambda^M$ there exists
a word
${\bf u}={\bf u}(D,{\bf t})\in\tilde\Lambda^*$,
$|{\bf u}|\ge M_*+1$,
and $k\in\{1,...,K\}$ such that
$g_{{\bf t^k}}(D\cap A_G)\subset f_{{\bf u^k}}(V_F)$ and
\begin{equation}\label{func-finite}
f_{\bf u}\circ f_{{\bf u^k}}^{-1}=g_{\bf t}\circ g_{{\bf t^k}}^{-1} \mbox{ on } g_{{\bf t^k}}(D\cap A_G).
\end{equation}

($F_{fin}$) the same holds after switching $G$ and $F$ (and obvious change of notations).

\end{enumerate}
\end{theorem}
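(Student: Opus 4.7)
The plan is to establish the cycle of implications (I)$\Rightarrow$(II)$\Rightarrow$(III)$\Rightarrow$(I). The essential ingredient is Theorem~\ref{t-symm} applied to $A$, which under the SSC is a $J$-like non-laminar Cantor set by Lemma~\ref{l-attrjlike}(1); density of periodic points (property ($\star$)) and the box-like restrictions of Lemma~\ref{l-ifsinv} will be used throughout.

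For (I)$\Rightarrow$(II), assume $A=A_G=A_F$ and fix a Euclidean ball $B_G=B_F=B(a,r)$ about some $a\in A$, taken inside $\Omega_G\cap\Omega_F$ with $r<\min(\rho_G,\rho_F)/2$ in the sense of Subsection~\ref{ss-symm}. For each sufficiently long ${\bf w}\in\Lambda^m$ (with $m\ge m_0$ to be fixed), pick a word ${\bf v}({\bf w})\in\tilde\Lambda^*$ of \emph{maximal} length such that $g_{\bf w}(B_G)\subset f_{{\bf v}({\bf w})}(\Omega_F)$. Maximality of $|{\bf v}({\bf w})|$, together with Koebe distortion and comparability of the hyperbolic and Euclidean metrics on compacts of $\Omega_F$, forces the derivative of
\[
\Psi_{\bf w}:=f_{{\bf v}({\bf w})}^{-1}\circ g_{\bf w}
\]
to be bounded on $B_G$ between two positive constants independent of $\bf w$. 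Each $\Psi_{\bf w}$ sends $B_G\cap A$ into $A$, so it is a local holomorphic symmetry of $A$ on $B_G$; the family $\{\Psi_{\bf w}\}$ is normal (uniform boundedness) and has no constant limit functions (uniform lower derivative bound), hence is non-trivial. Theorem~\ref{t-symm} now forces this family to be finite. Pick representatives $\Psi_{{\bf w}^k}=f_{{\bf v}^k}^{-1}\circ g_{{\bf w}^k}$, $k=1,\ldots,k_0$; for any $\bf w$ some $k$ satisfies $\Psi_{\bf w}=\Psi_{{\bf w}^k}$ on $B_G$, which rearranges to (\ref{func}). The symmetric argument produces (F).

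For (II)$\Rightarrow$(III), the local equations of (II) must be transferred onto all components of a box-like restriction. Cover the compact $A$ by finitely many balls $B_{a_i}=B(a_i,r)$ each satisfying the hypotheses of (II), and run the above construction with each $B_{a_i}$ in place of $B_G$; collecting representative words yields a single finite list $\{{\bf w}^k,{\bf v}^k\}_{k=1}^{K}$ and a uniform $m_0$ valid for all base points. By Lemma~\ref{l-ifsinv}(2) choose box-like restrictions $(G,V_G)$, $(F,V_F)$ whose components have Euclidean diameter less than $r/2$, so that every component $D$ of $V_G$ is contained in one of the $B_{a_i}$. Set ${\bf t}^k:={\bf w}^k$, ${\bf u}^k:={\bf v}^k$, $M_*:=\max_k(|{\bf w}^k|,|{\bf v}^k|)$, and fix $M\ge M_*+1$. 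For each $D$ and each ${\bf t}\in\Lambda^M$, the equation of (II) for the base point $a_i$ with $D\subset B_{a_i}$ restricts to (\ref{func-finite}) on $g_{{\bf t}^k}(D\cap A_G)$; the inclusion $g_{{\bf t}^k}(D\cap A_G)\subset f_{{\bf u}^k}(V_F)$ is automatic from well-definedness of both sides.

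For (III)$\Rightarrow$(I), which I expect to be the main technical obstacle, the goal is to prove ${\bf Per}_G\subset A_F$ (and symmetrically), from which density of ${\bf Per}_G$ in $A_G$ (property ($\star$)) and closedness of $A_F$ yield $A_G=A_F$. The composition $g_{\bf t}\circ g_{{\bf t}^k}^{-1}$ appearing in (\ref{func-finite}) is a strict contraction on $g_{{\bf t}^k}(D\cap A_G)$ (since $|{\bf t}|=M>|{\bf t}^k|$) whose unique fixed point $\beta^*$ lies in $\overline{g_{\bf t}(D\cap A_G)}\subset A_G$. By (\ref{func-finite}), $\beta^*$ is simultaneously the unique fixed point of the contraction $f_{\bf u}\circ f_{{\bf u}^k}^{-1}$; iterating (\ref{func-finite}) and alternating with ($F_{fin}$) applied to the transferred points, one shows inductively that the full forward $\varphi_F$-orbit of $y:=f_{{\bf u}^k}^{-1}(\beta^*)\in V_F$ stays in $\cup_j f_j(V_F)$ for all time, whence $y\in A_F$ by Lemma~\ref{l-ifsinv}(3), and consequently $\beta^*=f_{\bf u}(y)\in f_{\bf u}(A_F)\subset A_F$. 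Varying $D$ and $\bf t$ gives a dense set of such $\beta^*$ in $A_G$, hence $A_G\subset A_F$. The delicate step, where I expect the main difficulty to concentrate, is the combinatorial matching of ($G_{fin}$) and ($F_{fin}$) that keeps the forward $\varphi_F$-orbit trapped in $\cup_j f_j(V_F)$ after every iterate --- this is precisely where the finiteness encoded in (III) is used in an essential way.
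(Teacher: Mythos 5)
Your step (I)$\Rightarrow$(II) is essentially the paper's key lemma (Lemma \ref{l-symmifs}): build $f_{\bf v}^{-1}\circ g_{\bf w}$ as a normal, non-trivial family of local symmetries and invoke Theorem \ref{t-symm}. (One caveat: choosing ${\bf v}({\bf w})$ as the longest word with $g_{\bf w}(B_G)\subset f_{{\bf v}({\bf w})}(\Omega_F)$ does not by itself make $f_{{\bf v}({\bf w})}^{-1}\circ g_{\bf w}$ a symmetry of $A$, because under the SSC the open set $f_{\bf v}(\Omega_F)$ may still meet points of $A$ from sibling cylinders; the paper instead follows the $F$-coding of $g_{\bf w}(a)$ and forces the image into a ball of radius $\rho_F$ where $f_{\bf v}$ is known to be a symmetry. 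This is fixable.) The genuine structural gap is in your cycle. Your ``(II)$\Rightarrow$(III)'' covers $A$ by balls and ``runs the above construction'' on each — but that construction is the (I)$\Rightarrow$(II) argument, which needs $A_G=A_F$ to define the words ${\bf v}(a_i,{\bf w})$; hypothesis (II) gives you functional equations on a \emph{single} ball $B_G$ and nothing on the other $B_{a_i}$, and transferring a local identity on $B_G$ to all components of $V_G$ is exactly the analytic-continuation issue the paper flags in Remark \ref{r-rat} as non-automatic. So in your write-up (II) is never actually used as a hypothesis, (II)$\Rightarrow$(I) is never proved, and the three-way equivalence is not established. The paper avoids this by proving (I)$\Leftrightarrow$(II) (the direction (II)$\Rightarrow$(I) is a short separate argument: pick $z\in g_{{\bf w}^k}(B_G)$, let ${\bf w}_n$ code a point $a\in A_G$, and observe $f_{{\bf v}_r}(f_{{\bf v}^k}^{-1}(z))\to A_F$ while the left side tends to $a$) and then, independently, (I)$\Rightarrow$(III)$\Rightarrow$(I).

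The second gap is in (III)$\Rightarrow$(I), which you yourself flag as unfinished. Two concrete problems: first, for a fixed $M$ there are only finitely many components $D$ and words ${\bf t}\in\Lambda^M$, so the fixed points $\beta^*$ you produce form a \emph{finite} set, not a dense subset of $A_G$; second, the map $g_{\bf t}\circ g_{{\bf t}^k}^{-1}$ sends $g_{{\bf t}^k}(D\cap A_G)$ into $g_{\bf t}(D\cap A_G)$, and unless ${\bf t}^k$ is a prefix of ${\bf t}$ these cylinders are disjoint under the SSC, so the map has no fixed point in the set where (\ref{func-finite}) is asserted to hold. The paper's argument sidesteps both issues: it assembles all the equations into a single globally defined map $S:A_G\to A_G$, $S(x)=\varphi^{-|{\bf t}^{k(x)}|}\circ\varphi^M(x)=\psi^{-|{\bf u}^{k(x)}|}\circ\psi^{|{\bf u}(x)|}(x)$, iterates $S$, and uses $M\ge M_*+1$, $|{\bf u}|\ge M_*+1$ to guarantee the exponents $M_n,N_n\to\infty$; then a $\varphi$-pre-periodic $x_0$ has a finite $S$-orbit, forcing $\psi^{N_n}(x_0)$ to take finitely many values with $N_n\to\infty$, i.e.\ $x_0$ is $\psi$-pre-periodic and hence in $A_F$. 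That bookkeeping — which is exactly the ``delicate combinatorial matching'' you deferred — is the substance of the implication and is missing from your proposal.
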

\begin{remark}
Let us rewrite the system of equations (\ref{func-finite}) as follows.
First, every eq. of (\ref{func-finite}) is equivalent to:
$f_{{\bf u^k}}\circ f_{\bf u}^{-1}=g_{{\bf t^k}}\circ g_{\bf t}^{-1}$ on $g_{\bf t}(D\cap A_G)$.
Now using the associated to $G$, $F$ complex inverse maps
$\varphi=\varphi_G:U_G\to V_G$, $\psi=\varphi_F:U_F\to V_F$,
and that $\varphi|_{g_{\bf w}(V_G)}=g_{\bf w}^{-1}$ for each ${\bf w}\in\Lambda^*$, and similar for $\psi$,
the condition ($G_{fin}$) turns into the following:
for every component $D$ of $V_G$ and every ${\bf t}\in\Lambda^M$
there exist $k\in\{1,...,K\}$ and ${\bf u}={\bf u}(D,{\bf t})\in\tilde\Lambda^*$, $|{\bf u}|\ge M_*+1$,
such that $g_{\bf t}(D\cap A_G)\subset f_{\bf u}(V_F)$ and
\begin{equation}\label{eq-inv}
\psi^{-|{\bf u^k}|}\circ \psi^{|{\bf u}|}(x)=\varphi^{-|{\bf t^k}|}\circ \varphi^M(x) \mbox{ for every } x\in g_{\bf t}(D\cap A_G),
\end{equation}
where $\varphi^{-|{\bf t^k}|}=g_{{\bf t^k}}$,
$\psi^{-|{\bf u^k}|}=f_{{\bf u^k}}$ are inverse branches of
$\varphi^{|{\bf t^k}|}$, $\psi^{|{\bf u^k}|}$ respectively.
Notice a similarity between (\ref{eq-inv}) and functional equations of \cite{lp} (see (\ref{lp}) of the Introduction).
\end{remark}
Remind that ${\bf Per}_G=\{\beta: g_{\bf w}(\beta)=\beta, \ {\bf w}\in\Lambda^* \}$ and let
$${\bf Prep}_G={\bf Per}_G\cup\{g_{\bf w}(x): {\bf w}\in\Lambda^*, x\in {\bf Per}_G\}.$$
Note that
${\bf Prep}_G=\{x\in A_G: \varphi^P(x)=\varphi^Q(x), \ 0\le P<Q\}$.
In other words, ${\bf Prep}_G$ is the set (pre-)periodic points of $\varphi: U_G\to V_G$.
\begin{corollary}\label{c-preper}
Two holomorphic IFS $(G,\Omega_G)$, $(F,\Omega_F)$ that satisfy the SSC have the same attractor if and only if
${\bf Prep}_G={\bf Prep}_F$.
\end{corollary}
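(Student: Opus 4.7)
My plan is to treat the two directions separately. The reverse implication is an immediate density argument: since $(\star)$ places ${\bf Per}_G\subset{\bf Prep}_G\subset A_G$ with ${\bf Per}_G$ dense in $A_G$, we have $\overline{{\bf Prep}_G}=A_G$ and similarly $\overline{{\bf Prep}_F}=A_F$, so ${\bf Prep}_G={\bf Prep}_F$ forces $A_G=A_F$.

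For the forward direction I assume $A_G=A_F=A$ and invoke Theorem \ref{t-symmifs}(II), using the ``moreover'' clause to place the common ball $B_G=B_F$ around any chosen point of $A$. By symmetry it suffices to prove ${\bf Prep}_G\subset{\bf Prep}_F$. Given $x\in{\bf Prep}_G$, I would write $x=g_{{\bf u_1}}(y)$ with $y\in{\bf Per}_G$ and $g_{{\bf w_0}}(y)=y$ (the case $|{\bf u_1}|=0$ recovers $x=y$). Centering $B_G$ on $y$ and applying (II) to the words ${\bf w}_j:={\bf u_1}\cdot{\bf w_0}^j$ for $j$ large enough that $|{\bf w}_j|\ge m_0$, a pigeonhole on $k_j\in\{1,\dots,k_0\}$ produces an infinite subsequence along which $k_j=k$ is constant and
\[
f_{{\bf v}_j}\circ f_{{\bf v^k}}^{-1}=g_{{\bf w}_j}\circ g_{{\bf w^k}}^{-1}\text{ on }g_{{\bf w^k}}(B_G).
\]
Evaluating at $p=g_{{\bf w^k}}(y)\in g_{{\bf w^k}}(B_G)$, the right-hand side collapses to $g_{{\bf u_1}}(g_{{\bf w_0}^j}(y))=x$, so $f_{{\bf v}_j}(w)=x$ for every $j$ in the subsequence, where $w:=f_{{\bf v^k}}^{-1}(g_{{\bf w^k}}(y))\in\Omega_F$ is a single point independent of $j$.

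The crucial step, and the main obstacle I anticipate, is arguing $w\in A$, so that $f_{{\bf v}_j}(w)=x$ can be read as $\psi^{|{\bf v}_j|}(x)=w$ inside the intrinsic dynamics of the inverse map $\psi=\varphi_F$ on the attractor. Since $g_{{\bf w_0}^j}$ contracts to the attracting fixed point $y$, the sets $g_{{\bf w}_j}(\Omega_G)$ shrink to $\{x\}$; through the displayed equation this forces $\diam f_{{\bf v}_j}(\Omega_F)\to 0$ and hence $|{\bf v}_j|\to\infty$. Because $x\in f_{{\bf v}_j}(\Omega_F)$ for every $j$, the compact cylinders $f_{{\bf v}_j}(A)$ then converge in Hausdorff distance to $\{x\}$; uniqueness of the infinite $F$-address under the SSC forces the words ${\bf v}_j$ to converge in the product topology to the $F$-address of $x$, so for all large $j$ in the subsequence the word ${\bf v}_j$ is a prefix of that address and in particular $x\in f_{{\bf v}_j}(A)$. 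This gives $w=f_{{\bf v}_j}^{-1}(x)\in A$ and $\psi^{|{\bf v}_j|}(x)=w$. Picking $j_1<j_2$ in the subsequence with $|{\bf v}_{j_1}|\ne|{\bf v}_{j_2}|$ yields $\psi^{|{\bf v}_{j_1}|}(x)=\psi^{|{\bf v}_{j_2}|}(x)$, i.e. $x\in{\bf Prep}_F$. Swapping the roles of $G$ and $F$ completes the proof.
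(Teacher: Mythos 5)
Your reverse direction is exactly the paper's (density of ${\bf Per}_G\subset{\bf Prep}_G$ in the attractor), and your overall plan for the forward direction --- produce, for a preperiodic point $x$ of $\varphi_G$, two distinct backward $F$-times landing at the same point of $A$ --- is sound. But there is a genuine gap at precisely the step you flag as crucial: the passage from ``$f_{{\bf v}_j}(w)=x$ with $|{\bf v}_j|\to\infty$'' to ``${\bf v}_j$ is eventually a prefix of the $F$-address of $x$, hence $w=f_{{\bf v}_j}^{-1}(x)\in A$''. Hausdorff convergence $f_{{\bf v}_j}(A)\to\{x\}$ gives at best convergence of the words ${\bf v}_j$ to the address of $x$ in the product topology, and that does \emph{not} yield the prefix property: a word may agree with the address in all but its last letter, so every fixed coordinate stabilizes while $x\notin f_{{\bf v}_j}(A)$ for any $j$. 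Concretely, for $f_1(t)=t/3$, $f_2(t)=(t+2)/3$ on a large disk, the point $x=0$ has address $(1,1,1,\dots)$, yet $f_{(1,\dots,1,2)}(-2)=0$ for words of every length; here $w=-2\notin A$, the cylinders $f_{(1,\dots,1,2)}(A)$ converge to $\{0\}$ in Hausdorff distance, and none of them contains $0$. So the bare statement of (II) --- which only asserts the existence of \emph{some} ${\bf v}$ making the functional equation hold --- does not force $w\in A$, and without $w\in A$ you cannot read $f_{{\bf v}_j}(w)=x$ as $\psi^{|{\bf v}_j|}(x)=w$ inside the dynamics of $\psi=\varphi_F$ on the attractor.

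The gap is repairable, but only by going back inside the proof of (I)$\rightarrow$(II) rather than quoting its conclusion: in Lemma \ref{l-symmifs} the word ${\bf v}(a,{\bf w})$ is \emph{constructed} as a truncation of the $F$-address of $g_{\bf w}(a)$, so that $H_{{\bf w},{\bf v}(a,{\bf w})}(a)=f_{{\bf v}(a,{\bf w})}^{-1}(g_{\bf w}(a))$ lies in $A$ by construction; taking $a=y$ gives $w=H_{{\bf w^k},{\bf v^k}}(y)\in A$ and $x\in f_{{\bf v}_j}(A)$ at once, after which your ``two exit times'' conclusion $\psi^{|{\bf v}_{j_1}|}(x)=\psi^{|{\bf v}_{j_2}|}(x)$ closes the argument correctly. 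The paper itself proceeds differently: it derives the forward direction from the implication (III)$\rightarrow$(I), building a map $S:A\to A$ with the two expressions $\varphi^{-|{\bf t^k}|}\circ\varphi^{M}=\psi^{-|{\bf u^k}|}\circ\psi^{|{\bf u}|}$ on $A$ and observing that a $\varphi$-preperiodic $S$-orbit is finite, hence so is the associated set of $\psi$-iterates. Either route works, but in both the membership of the relevant preimages in $A$ is secured at the level of the construction, not inferred from metric convergence of cylinders.
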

In one direction ("two sets coincide$\rightarrow$attractors coincide") it is immediate
because sets ${\bf Prep}_G$, ${\bf Prep}_F$ are dense in $A_G$, $A_F$ respectively.
The proof of another direction can be found in Subsect \ref{ss-main}.
\begin{remark}\label{r-rat}
One might wonder when the equations of (II) imply the equations of (III) by analytic continuations. This is so if, for example, there exists a single domain
in $\Omega_G\cap\Omega_F$ that contains
balls $B_G,B_F$ and some neighborhoods of $A_G, A_F$. The most apparent case is probably the following.
Assume that the inverse maps $\varphi=\varphi_G$, $\psi=\varphi_F$
extend to rational functions of degrees $m=\# G$, $d=\# F$ respectively. It then follows from Lemma \ref{l-ifsinv} that Julia sets of $\varphi, \psi$ are precisely the attractors $A_G$, $A_F$  and $\varphi, \psi$ are hyperbolic rational maps
(i.e., uniformly expanding on their Julia sets).
Take a single equation (\ref{func})
and write it in the form
$\varphi^{-|{\bf w^k}|}\circ \varphi^{|{\bf w}|}=\psi^{-|{\bf v^k}|}\circ \psi^{|{\bf v}|}$
identically on some open set $Z_{\bf w}$, for some local branches $\varphi^{-|{\bf w^k}|}$, $\psi^{-|{\bf v^k}|}$.
and some choice of ${\bf w}, {\bf w^k}, {\bf v}, {\bf v^k}$ such that $|{\bf w}|>|{\bf w^k}|$, $|{\bf v}|>|{\bf v^k}|$.
Then, by Theorem 3(2) of \cite{le} or Theorem B of \cite{lp} this functional equation along is equivalent to $\varphi, \psi$ having the same Julia set (i.e., $G$ and $F$ having the same attractor). Thus
in the case when $G$, $F$ are inverse branches of rational functions
the infinite sequence of equations in the condition (II) of Theorem \ref{t-symmifs}
is reduced to any single equation of the sequence and also is equivalent to any eq. of (III).
\end{remark}
Let $(F, \Omega_F)$, $F=\{f_j\}_{j\in\tilde\Lambda}$, be a finite holomorphic IFS with the SSC, and $A=A_F$.
In the next statement we describe all possible holomorphic IFS with the SSC which can have the same attractor $A$. For every pair
${\bf v}, {\bf \tilde v}\in\tilde\Lambda^*$ of finite words, let
$$R_{{\bf v},{\bf \tilde v}}:=f_{\bf v}\circ f_{{\bf \tilde v}}\circ f_{\bf v}^{-1}.$$
It maps the domain $f_{\bf v}(\Omega_F)$ into itself and has a unique fixed point there.
Roughly speaking, the set of maps $R_{{\bf v}, {\bf \tilde v}}$ determine all other IFS with the same attractor:
\begin{theorem}\label{t-finitegen}
First, for each $l\in\N$ and each $R_{{\bf v},{\bf \tilde v}}$, there exist precisely $l$ different local (i.e, defined and holomorphic in a neighborhood of the fixed point of $R_{{\bf v},{\bf \tilde v}}$) solutions $g^{(r)}_{{\bf v},{\bf \tilde v}, l}$, $r=1,...,l$, of the equation $g^l=R_{{\bf v},{\bf \tilde v}}$ (where $g^l$ is the $l$-iterate of $g$). If now $(G,\Omega_G)$ is any other holomorphic IFS with the SSC and the same attractor $A$, then there exists $K\in\N$
such that each $g_i\in G$ is a holomorphic continuation of one of the local functions
$$\{g^{(r)}_{{\bf v},{\bf \tilde v}, l}: {\bf v},{\bf \tilde v}\in\tilde\Lambda^*,
l=1,...,K, r=1,...,l\}.$$
\end{theorem}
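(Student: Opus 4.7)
The plan is to dispatch the counting assertion via Koenigs' linearization and the main assertion by pigeonholing the functional equations of Theorem \ref{t-symmifs}(III).

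For the counting part, let $R:=R_{{\bf v},{\bf \tilde v}}$. Because $f_{{\bf \tilde v}}$ is a strict contraction on $\Omega_F$, its fixed point $\beta_{{\bf \tilde v}}$ is attracting with multiplier $\mu:=f'_{{\bf \tilde v}}(\beta_{{\bf \tilde v}})$ of modulus in $(0,1)$, and conjugation preserves multipliers, so $R$ has the attracting fixed point $a:=f_{\bf v}(\beta_{{\bf \tilde v}})$ with the same $\mu$. Koenigs' theorem supplies a local linearizer $\phi$ at $a$, unique up to a scalar, with $\phi(a)=0$ and $\phi\circ R=\mu\,\phi$. Any local holomorphic $g$ with $g(a)=a$ and $g^l=R$ has its own Koenigs coordinate $\psi$ satisfying $\psi\circ g=\nu\,\psi$, where $\nu:=g'(a)$ necessarily obeys $\nu^l=\mu$; iterating gives $\psi\circ g^l=\mu\,\psi$, so $\psi$ also linearizes $R$, and uniqueness forces $\psi=c\phi$ and hence $g=\phi^{-1}\circ(\nu\,\cdot)\circ\phi$. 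Thus $g$ is determined by the choice of $\nu$ among the $l$ distinct $l$-th roots of $\mu$, giving exactly $l$ local solutions $g^{(r)}_{{\bf v},{\bf \tilde v},l}$.

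For the main assertion, fix $g_i\in G$ with attracting fixed point $\beta_i\in A$ and multiplier $\alpha_i$, and let $D_0$ be the component of $V_G$ (from the box-like restriction in (III)) containing $\beta_i$. Invoke Theorem \ref{t-symmifs}(III) for $(G,F)$, producing reference words ${\bf t^k},{\bf u^k}$ for $k=1,\ldots,K_0$ and an exponent $M_0$. For each $n\ge 1$ take ${\bf t}=(i,i,\ldots,i)\in\Lambda^{M_0 n}$ and $D=D_0$, obtaining ${\bf u}^{(n)}\in\tilde\Lambda^*$ and $k(n)\in\{1,\ldots,K_0\}$ with
$$g_i^{M_0 n}\circ g_{{\bf t^{k(n)}}}^{-1}=f_{{\bf u}^{(n)}}\circ f_{{\bf u^{k(n)}}}^{-1}\quad\text{on } g_{{\bf t^{k(n)}}}(D_0\cap A_G).$$
Since $A_G$ is a Cantor set without isolated points, $D_0\cap A_G$ has accumulation points and the identity theorem extends each equation holomorphically to a neighborhood of $g_{{\bf t^{k(n)}}}(\beta_i)$. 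By pigeonhole pass to a subsequence on which $k(n)\equiv k_0$.

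Evaluating at $\beta_i$ gives $\beta_i=f_{{\bf u}^{(n)}}(w)$ with $w:=f_{{\bf u^{k_0}}}^{-1}(g_{{\bf t^{k_0}}}(\beta_i))$ independent of $n$. The inclusion $g_{{\bf t^{k_0}}}(D_0\cap A_G)\subset f_{{\bf u^{k_0}}}(V_F)$ places $g_{{\bf t^{k_0}}}(\beta_i)$ in a specific box-like cell for $F$, and disjointness of same-level cells under the SSC of $F$ forces $g_{{\bf t^{k_0}}}(\beta_i)\in f_{{\bf u^{k_0}}}(A_F)$, hence $w\in A_F$. Differentiating at $\beta_i$ shows $|f'_{{\bf u}^{(n)}}(w)|$ decays like $|\alpha_i|^{M_0 n}$, so $|{\bf u}^{(n)}|\to\infty$ and we may assume the lengths $|{\bf u}^{(n)}|$ are strictly increasing. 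Since $\beta_i$ possesses a unique $F$-address by the SSC and $\beta_i\in f_{{\bf u}^{(n)}}(A_F)$ for every $n$, the word ${\bf u}^{(n)}$ must be the length-$|{\bf u}^{(n)}|$ prefix of that address; in particular, for $n_1<n_2$ there is ${\bf \tilde v}'\in\tilde\Lambda^*$ with ${\bf u}^{(n_2)}={\bf u}^{(n_1)}{\bf \tilde v}'$ as a concatenation. Combining the two equations and rearranging yields
$$g_i^{M_0(n_2-n_1)}=f_{{\bf u}^{(n_2)}}\circ f_{{\bf u}^{(n_1)}}^{-1}=R_{{\bf u}^{(n_1)},{\bf \tilde v}'}$$
on a neighborhood of $\beta_i$. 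With $l:=M_0(n_2-n_1)$ this identifies $g_i$ near $\beta_i$ with one of the $l$ local solutions $g^{(r)}_{{\bf u}^{(n_1)},{\bf \tilde v}',l}$ from the first part, and analytic continuation produces the full map $g_i$ on $\Omega_G$. Taking $K$ to be the maximum of the finitely many exponents $l$ needed across $g_i\in G$ yields the uniform bound. The principal technical hurdle is the prefix step: establishing $w\in A_F$ via the box-like structure, and then invoking uniqueness of $F$-addresses under the SSC to align the ${\bf u}^{(n)}$ as nested prefixes of one address of $\beta_i$.
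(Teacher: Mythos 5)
Your Koenigs argument for the counting assertion is exactly the paper's (the paper's ``Claim'' at the end of the proof), and the overall architecture of your main argument --- produce a relation $g_i^{l}=f_{{\bf u}}\circ f_{{\bf u}'}^{-1}$ near $\beta_i$, use uniqueness of the $F$-address of a point of $A$ under the SSC to see that ${\bf u}'$ is a prefix of ${\bf u}$, and hence rewrite the right-hand side as $R_{{\bf u}',{\bf \tilde v}'}$ --- is also the paper's. The difference is where the finiteness comes from: the paper works directly with Lemma \ref{l-symmifs}, which for \emph{every} word ${\bf w}$ with $|{\bf w}|\ge N$ produces $H_{{\bf w},{\bf v}(a,{\bf w})}=f_{{\bf v}(a,{\bf w})}^{-1}\circ g_{{\bf w}}$ on $B(a,r)$ together with the uniform bound $K=K(F,G)$ on the number of distinct such maps; applying this to ${\bf w}=(i)^k$ it gets an exact coincidence $H_k=H_{k+l}$ with $1\le l\le K$, which immediately yields $g_i^{l}=f_{{\bf v}_{k+l}}\circ f_{{\bf v}_k}^{-1}$ on $g_i^k(B)$. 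You instead pigeonhole only on the reference index $k(n)$ and subtract two relations; your handling of the degenerate cases (forcing strictly increasing lengths $|{\bf u}^{(n)}|$ via the derivative estimate, rather than the paper's explicit exclusion of ${\bf v}_k={\bf v}_{k+l}$ and of the reversed extension via the attracting/repelling dichotomy) is a clean alternative.

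There is, however, one genuine misstep: you invoke Theorem \ref{t-symmifs}(III) for words ${\bf t}=(i,\dots,i)\in\Lambda^{M_0 n}$ for all $n$, but condition ($G_{fin}$) of (III) supplies the functional equations only for words ${\bf t}$ of the \emph{single fixed length} $M$. As written, the supply of relations you pigeonhole over does not exist for $n\ge 2$. Iterating the relations of (III) (as in the paper's proof of (III)$\Rightarrow$(I)) does not directly rescue this either, because the composed relations correspond to words dictated by the itinerary of the map $S$, not to the constant word $(i,\dots,i)$. The repair is simply to replace (III) by its source, Lemma \ref{l-symmifs}, which is stated for all words of length at least $N$ and already carries the uniform bound $K$ you need; with that substitution the rest of your argument, including the prefix step and the extraction of $w\in A_F$ from the box-like structure, goes through and reproduces the paper's proof.
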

The proof implies:
\begin{corollary}\label{c-exact}
Given the attractor $A$ of a holomorphic IFS $G$ with the SSC, and some local symmetry
$h: B(a,r)\to\C$ of $A$ such that $a$ is an attracting fixed point of $h$, there exist $l\in\{1,...,K\}$ where $K$ depends only
on $r$ and $G$, and
some ${\bf v}, {\bf \tilde v}\in\Lambda^*$ such that
$$h^l=g_{\bf v}\circ g_{{\bf \tilde v}}\circ g_{\bf v}^{-1}$$
on some ball around the point $a$.
\end{corollary}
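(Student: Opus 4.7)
My plan is to derive the corollary by constructing, from $h$ and $G$, an auxiliary SSC holomorphic IFS $F$ with the same attractor $A$ that incorporates a high iterate of $h$, and then invoking Theorem~\ref{t-finitegen} on the pair $(G,F)$.

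Concretely, I would shrink $r$ to some $r_0\in(0,r]$ so that the disc $W:=B(a,r_0)$ satisfies $h(\overline W)\subset W$ and $\partial W\cap A=\emptyset$. The SSC together with Lemma~\ref{l-ifsinv}(2) yields a level $n_0$ at which every cylinder $g_{\mathbf w}(\overline{\Omega_G})$, $\mathbf w\in\Lambda^{n_0}$, either lies inside $W$ or is disjoint from $\overline W$; let $\mathcal C$ denote the first kind, so $A\cap W=\bigsqcup_{\mathbf w\in\mathcal C}g_{\mathbf w}(A)$. I choose $l\ge 1$ with $h^l(\overline W)$ compactly inside the unique level-$n_0$ cylinder containing $a$, refine to a level $n_1\ge n_0$ at which every cylinder lies entirely inside or entirely outside $\overline{h^l(W)}$, and collect the ``outside'' words into $\mathcal D'$. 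The symmetry identity $h^l(A\cap W)=A\cap h^l(W)$ gives the disjoint decomposition
$$A=\bigsqcup_{\mathbf w\in\mathcal C}(h^l\circ g_{\mathbf w})(A)\,\sqcup\bigsqcup_{\mathbf u\in\mathcal D'}g_{\mathbf u}(A),$$
so $F:=\{h^l\circ g_{\mathbf w}:\mathbf w\in\mathcal C\}\cup\{g_{\mathbf u}:\mathbf u\in\mathcal D'\}$ extends to a holomorphic IFS with the SSC and attractor $A$ on a suitable bounded domain $\Omega_F\supset A\cup W$.

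Applying Theorem~\ref{t-finitegen} with the original $G$ as the given IFS and the constructed $F$ as the ``other,'' each generator of $F$ is locally a holomorphic $l'$-th root of a map of the form $g_{\mathbf v}\circ g_{\mathbf{\tilde v}}\circ g_{\mathbf v}^{-1}$. Applying this to the distinguished generator $h^l\circ g_{\mathbf w_0}$, where $\mathbf w_0\in\mathcal C$ is the cylinder containing $a$, and absorbing $g_{\mathbf w_0}$ into the conjugating word of length $|{\mathbf w_0}|+|\mathbf v|$ yields an identity of the form $h^{ll'}=g_{\mathbf v}\circ g_{\mathbf{\tilde v}}\circ g_{\mathbf v}^{-1}$ on a ball around $a$, which is the desired conclusion.

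The main obstacle is controlling $K$ so that it depends only on $r$ and $G$, not on $h$. I expect the cleanest route is to invoke not merely the \emph{statement} but the \emph{proof} of Theorem~\ref{t-finitegen}: that proof analyses an arbitrary local holomorphic contraction which is symmetric on $A$ near an attracting fixed point in $A$ and deduces the conjugacy relation directly, without the map ever being a generator of an ambient IFS. Applied verbatim to $h$, this bypasses the auxiliary-IFS construction altogether, and the bound $K$ then reads off as a quantity depending only on the radius $r$ and the cylinder geometry of $G$.
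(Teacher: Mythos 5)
Your main route breaks down at its final step. Theorem \ref{t-finitegen}, applied with the original $G$ as the reference system and your auxiliary $F$ as the ``other'' one, yields a relation $(h^l\circ g_{\mathbf{w}_0})^{l'}=g_{\mathbf v}\circ g_{\tilde{\mathbf v}}\circ g_{\mathbf v}^{-1}$ valid near the fixed point of $h^l\circ g_{\mathbf{w}_0}$ --- which is not $a$. Since $h^l$ and $g_{\mathbf{w}_0}$ do not commute, the left-hand side is the alternating composition $h^l\circ g_{\mathbf{w}_0}\circ\cdots\circ h^l\circ g_{\mathbf{w}_0}$, and no ``absorption of $g_{\mathbf{w}_0}$ into the conjugating word'' can isolate a pure power of $h$; even in the case $l'=1$ one only gets $h^l=g_{\mathbf v}\circ g_{\tilde{\mathbf v}}\circ g_{\mathbf v}^{-1}\circ g_{\mathbf{w}_0}^{-1}$, which is not a conjugate of a word by a word. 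Moreover, the integer $l$ you pick so that $h^l(\overline W)$ lands inside the cylinder of $a$ depends on $|h'(a)|$, and the constant $K$ furnished by Theorem \ref{t-finitegen} for the pair $(G,F)$ depends on $F$, hence on $h$; so even if the algebra went through, the exponent would not be bounded by a quantity depending only on $r$ and $G$, which is the actual content of the corollary.

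Your closing paragraph, by contrast, is precisely the paper's argument, and it does work: the corollary is announced as a consequence of the \emph{proof} of Theorem \ref{t-finitegen}, not of its statement. That proof uses of $g_{\mathbf w}$ only that it is a local symmetry of $A$ on a ball around an attracting fixed point lying in $A$. Running the construction of Lemma \ref{l-symmifs} with $h^k$ in place of $g_{\mathbf w}$ (here $h^k(a)=a$, so the matching words $\mathbf{v}_k\in\Lambda^*$ are truncations of the coding of $a$ itself) produces symmetries $H_k=g_{\mathbf{v}_k}^{-1}\circ h^k$ of $A$ on a fixed ball about $a$ whose radius depends only on $r$ and $G$; Theorem \ref{t-symm} bounds the number of distinct $H_k$ by some $K=K(r,G)$, so $H_k=H_{k+l}$ for some $l\le K$, whence $h^l=g_{\mathbf{v}_{k+l}}\circ g_{\mathbf{v}_k}^{-1}$ near $a$. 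The SSC uniqueness-of-coding remark then forces $\mathbf{v}_{k+l}=\mathbf{v}_k\tilde{\mathbf v}$ (the alternatives are excluded because $h^l$ is neither the identity nor repelling at $a$), giving $h^l=g_{\mathbf{v}_k}\circ g_{\tilde{\mathbf v}}\circ g_{\mathbf{v}_k}^{-1}$. You should carry out this version and discard the auxiliary-IFS construction.
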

Here is another immediate corollary where we use the following
\begin{definition}\label{sp}
Given a finite holomorphic IFS $G=\{g_i\}_{i\in\Lambda}$ on the plane, we define the {\it multiplier (or dynamical) spectrum}
$\sigma(G)$ of $G$ as the collection of derivatives $g'_{\bf w}(\beta_{\bf w})$ over all finite words ${\bf w}\in\Lambda^*$.
\end{definition}
Note that $\lambda\in \sigma(G)$ implies $\lambda^n\in \sigma(G)$ for all $n\in\N$, and if $G=\{g_i(z)=\lambda_i z+ b_i\}_{i\in\Lambda}$ is a finite set of linear contractions, then
$\sigma(G)$ is a semigroup (w.r.t. the multiplication) generated by $\{\lambda_i\}_{i\in\Lambda}$.
\begin{corollary}\label{c-sp}
If $G$, $F$ are two holomorphic IFS that satisfy the SSC and share the same attractor, then
for every $\lambda\in \sigma(G)$ there is $l\in\N$ such that $\lambda^l\in \sigma(F)$, and the same holds after switching $G$ and $F$.
Here, all $l$ are uniformly bounded by a finite number which depends only on $F$ and $G$.
\end{corollary}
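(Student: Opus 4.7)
The plan is to deduce the statement directly from Corollary \ref{c-exact}, applied with the roles of ``ambient IFS'' and ``local symmetry'' interchanged between $G$ and $F$. Fix $\lambda\in\sigma(G)$, so that $\lambda=g_{\bf w}'(\beta_{\bf w})$ for some ${\bf w}\in\Lambda^*$, where $\beta_{\bf w}\in A_G$ is the attracting fixed point of $g_{\bf w}$. By the discussion at the beginning of Subsection \ref{ss-symm}, the composition $g_{\bf w}$ is a local holomorphic symmetry of $A_G$ on every Euclidean ball $B(a,\rho_G)$ with $a\in A_G$, where $\rho_G>0$ depends only on $G$. Since the attractor is shared, $A:=A_G=A_F$, the very same map $g_{\bf w}$ is also a local holomorphic symmetry of $A_F$ on $B(\beta_{\bf w},\rho_G)$, and $\beta_{\bf w}$ is an attracting fixed point of it with multiplier $\lambda$.

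Next I would apply Corollary \ref{c-exact}, now to the IFS $F$, with $h=g_{\bf w}$, $a=\beta_{\bf w}$, and $r=\rho_G$. This yields an integer $l\le K_{G,F}$ (where $K_{G,F}$ depends only on $\rho_G$ and $F$, hence only on $G$ and $F$) together with words ${\bf v},{\bf \tilde v}\in\tilde\Lambda^*$ such that
$$g_{\bf w}^{\,l}=f_{\bf v}\circ f_{\bf \tilde v}\circ f_{\bf v}^{-1}$$
identically on some ball around $\beta_{\bf w}$. Differentiating both sides at $\beta_{\bf w}$ and using that a holomorphic conjugation preserves the multiplier at the (unique) fixed point, the left-hand side gives $\lambda^l$ while the right-hand side gives $f_{\bf \tilde v}'(\beta_{\bf \tilde v})$. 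Hence
$$\lambda^l=f_{\bf \tilde v}'(\beta_{\bf \tilde v})\in\sigma(F),$$
as required. Swapping the roles of $G$ and $F$ produces the reverse assertion with a uniform bound $K_{F,G}$, and one takes the common uniform bound to be $\max\{K_{G,F},K_{F,G}\}$.

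The main obstacle is purely conceptual rather than technical: one must recognize that Corollary \ref{c-exact} remains meaningful and applicable when the local symmetry is produced by finite compositions from a \emph{different} IFS sharing the same attractor. This is legitimate precisely because the hypothesis of Corollary \ref{c-exact} involves only the attractor $A$ and an arbitrary local holomorphic symmetry of it, not the IFS that happens to manufacture that symmetry. Once this observation is made, no additional input from Theorem \ref{t-symmifs} or Theorem \ref{t-finitegen} is needed, and the proof reduces to the one-line computation of multipliers above.
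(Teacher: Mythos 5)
Your proposal is correct and follows essentially the same route as the paper: the paper derives the conjugacy $g_{\bf w}^l=f_{\bf v}\circ f_{\bf \tilde v}\circ f_{\bf v}^{-1}$ near $\beta_{\bf w}$ (the content of Corollary \ref{c-exact}) in the course of proving Theorem \ref{t-finitegen}, and then reads off $\lambda^l=f_{\bf \tilde v}'(\tilde\beta)\in\sigma(F)$ exactly as you do, with the uniform bound on $l$ coming from the constant $K(F,G)$ of Lemma \ref{l-symmifs}. Your only departure is organizational — invoking Corollary \ref{c-exact} as a black box with $h=g_{\bf w}$ rather than re-running the argument — which is legitimate since $\beta_{\bf w}\in A$ and $g_{\bf w}$ is indeed a local symmetry of the shared attractor.
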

\begin{remark}\label{r-diff}
Suppose that attractors $A_G$, $A_{\tilde G}$ of two holomorphic IFS $(G,\Omega_G)$ and $(\tilde G,\Omega_{\tilde G})$ are such that
$h(A_G)=\tilde A$ for some analytic diffeomorphism $h:U\to \tilde U$ between neighborhoods $U$, $\tilde U$ of $A_G$, $A_{\tilde G}$. Let $f_j=h^{-1}\circ \tilde g_j\circ h$ where $\tilde G=\{\tilde g_j\}$. Then maps $f_j$ are defined on an appropriate neighborhood of $A_G$
and all results of this section with minor modifications apply to $G=\{g_i\}$ and $F=\{f_j\}$. Details are left to the reader.
\end{remark}
\section{Some results in complex analysis}
Proofs of the main results are based on Theorem \ref{t-symm} and several facts of geometric complex analysis.
\subsection{The Koebe distortion theorem}\label{ss-k}
The first one is the Koebe Distortion Theorem (see e.g. \cite{goluzin}):
\begin{theorem}\label{koebe}

1. For any univalent function $\phi: B(0, R)\to\C$ such that $\phi(0)=0$, and all $t\in[0,1)$,
$$B(0,\frac{t}{4}R|\phi'(0)|)
\subset\phi(B(0,tR))\subset B(0, \frac{t}{(1-t)^2}R|\phi'(0)|).$$
In particular, the family of univalent functions $\phi:B(0,R)\to\C$ such that $|\phi(0)|\le C$
and $1/C\le |\phi'(0)|\le C$ for a fixed $C$ and all $\phi$ is compact (where the convergence is uniform on compacts in $B(0,R)$).


2. Given a domain $D$ and a compact $K\subset D$, there exist $\lambda>1$ such that, for every univalent
$\phi: D\to\C$ and all $x,y\in K$:
$$\frac{|\phi'(x)|}{|\phi'(y)|}\le \lambda.$$

\end{theorem}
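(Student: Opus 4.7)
The plan is to reduce both parts to the classical setting of schlicht functions on the unit disk. Given any univalent $\phi:B(0,R)\to\C$ with $\phi(0)=0$, I set
$$\psi(w)=\phi(Rw)/(R\phi'(0)), \qquad w\in\D.$$
Then $\psi$ is \emph{schlicht}: univalent on $\D$ with $\psi(0)=0$ and $\psi'(0)=1$. All assertions in Theorem \ref{koebe} will follow by applying the classical Koebe $1/4$, growth, and distortion theorems to $\psi$ (or to a further rescaling) and unwinding the change of variables via $\phi(z)=R\phi'(0)\psi(z/R)$.

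For part (1), first apply the Koebe $1/4$ theorem to $\psi$ itself to obtain $\psi(\D)\supset B(0,1/4)$, and then apply it to the schlicht rescaling $u\mapsto\psi(tu)/t$ on $\D$ to obtain $\psi(B(0,t))\supset B(0,t/4)$. The outer inclusion comes from the Koebe growth inequality $|\psi(w)|\le |w|/(1-|w|)^2$. Unwinding yields the two stated inclusions. The compactness assertion is then immediate: the growth estimate makes the normalized family locally uniformly bounded, Montel gives normality, and Hurwitz combined with the assumption $|\phi'(0)|\ge 1/C>0$ rules out constant limits, so the limits are again univalent and satisfy the same normalization.

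For part (2), I would use the sharp distortion inequality $(1-r)/(1+r)^3\le |\psi'(w)|\le (1+r)/(1-r)^3$ for $|w|\le r<1$. Given a compact $K\subset D$, pick $\rho>0$ with $\overline{B(x,2\rho)}\subset D$ for every $x\in K$ (possible by compactness of $K$ and openness of $D$). Applying the distortion inequality after rescaling $\phi$ to a schlicht function on $B(x,2\rho)$ gives a uniform bound $|\phi'(y)|/|\phi'(x)|\le \lambda_0$ for every $y\in B(x,\rho)$, independent of the univalent $\phi$. A standard chain argument — joining any two points of $K$ by a path in $K$ and covering it by at most $N$ such half-disks, with $N$ depending only on $K$ and $D$ — then produces the global ratio bound $\lambda=\lambda_0^N$.

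The only nontrivial input is the Bieberbach coefficient bound $|a_2|\le 2$ for schlicht functions, from which the Koebe $1/4$, growth, and distortion theorems all follow by direct algebraic manipulation; the Bieberbach bound itself rests on Gronwall's area theorem applied to an odd-branch square-root construction. Since this chain is classical and fully developed in Goluzin, the real obstacle here is mere bookkeeping of the rescaling constants $R$ and $\phi'(0)$, not any genuine mathematical difficulty, and the cleanest exposition is simply to cite these estimates as done in the statement.
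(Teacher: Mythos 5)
Your proposal is correct and is the standard derivation from the theory of schlicht functions (Koebe $1/4$, growth and distortion theorems, all resting on Bieberbach's bound $|a_2|\le 2$ via the area theorem); the paper itself offers no proof of this statement, citing Goluzin and remarking only that part 2 is a consequence of part 1, so there is essentially nothing to compare against. The one point to tidy is in your chain argument for part 2: you propose joining two points of $K$ by a path \emph{in $K$}, but $K$ is an arbitrary compact subset of the domain $D$ and need not be connected (in this paper $K$ is typically a Cantor set). The routine fix is to replace $K$ by a connected compact set $K'$ with $K\subset K'\subset D$ (possible since $D$ is open and connected, hence path-connected), cover $K'$ by finitely many disks $B(x,\rho)$ with $\overline{B(x,2\rho)}\subset D$, and run the chain of overlapping half-disks through $K'$; the bound $\lambda=\lambda_0^N$ then holds with $N$ depending only on $K$ and $D$, as required.
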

In fact, part 2 is a consequence of part 1.
\subsection{Schwarz's lemma and the slit complex plane}\label{ss-s}
A domain $U$ of the complex plane is hyperbolic if $\#\partial U > 2$.
Every hyperbolic domain $U$ carries a unique {\it hyperbolic metric} $d_U(z) |dz|$, i.e. such that some (hence, any) unbranched holomorphic covering map of the unit disk $\D$ onto $U$ is a local isometry in the hyperbolic metric $|dw|/(1-|w|^2)$ on $\D$ and the metric $d_U(z) |dz|$ on $U$.
We have the following version of the classical Schwarz Lemma see e.g. \cite{goluzin}:
\begin{theorem}\label{schw}
Every holomorphic map $f: D\to D'$ between two hyperbolic domains is a weak contraction in the hyperbolic metrics of $D, D'$.
It is an isometry if and only if $f:D\to D'$ is a holomorphic diffeomorphism.
\end{theorem}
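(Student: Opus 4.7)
The plan is to reduce to the classical Schwarz--Pick lemma on the unit disk by passing to universal covers. Recall that every hyperbolic domain $U$ admits a holomorphic universal covering map $\pi_U : \D \to U$, and the hyperbolic metric $d_U(z)|dz|$ on $U$ is characterised by the requirement that $\pi_U$ be a local isometry from the Poincar\'{e} metric $|dw|/(1-|w|^2)$ on $\D$ to $d_U |dz|$ on $U$.

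First, I would fix covering maps $\pi_D : \D \to D$ and $\pi_{D'} : \D \to D'$. Since $\D$ is simply connected, the standard lifting criterion for covering spaces produces a holomorphic map $\tilde f : \D \to \D$ with $\pi_{D'} \circ \tilde f = f \circ \pi_D$. The classical Schwarz--Pick lemma on $\D$ asserts that $\tilde f$ is a weak contraction for the Poincar\'{e} metric, with equality of the hyperbolic derivative at some (hence every) point if and only if $\tilde f$ is a M\"{o}bius automorphism of $\D$. Because $\pi_D$ and $\pi_{D'}$ are local isometries, this contraction descends immediately to $f$, yielding the first assertion.

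For the equivalence in the second assertion, one direction is easy: if $f$ is a holomorphic diffeomorphism, applying the weak contraction bound to both $f$ and $f^{-1}$ at a point and its image forces equality of hyperbolic derivatives everywhere, so $f$ is an isometry. Conversely, suppose $f$ is an isometry. Then at any $w \in \D$ the lift $\tilde f$ preserves the Poincar\'{e} infinitesimal norm, so by the equality case on $\D$ the map $\tilde f$ is a M\"{o}bius automorphism of $\D$, in particular a biholomorphism. To promote this to $f$ being a biholomorphism $D \to D'$ I would track the deck transformation groups $\Gamma, \Gamma'$ of the two covers: the relation $\pi_{D'} \circ \tilde f = f \circ \pi_D$ forces that for each $\gamma \in \Gamma$ there exists $\gamma' \in \Gamma'$ with $\tilde f \circ \gamma = \gamma' \circ \tilde f$, so $\tilde f \Gamma \tilde f^{-1} \subseteq \Gamma'$. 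Applying the same argument to $\tilde f^{-1}$, which covers a locally well-defined inverse of $f$, yields the reverse inclusion, whence $\tilde f$ descends to a biholomorphism $\D/\Gamma \to \D/\Gamma'$, namely $f$ itself.

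I expect the main obstacle to be precisely this last step: the deck-group bookkeeping needed to promote the statement ``$\tilde f$ is a disk automorphism'' to ``$f$ is a diffeomorphism of $D$ onto $D'$'' requires some care because $\pi_D$ is not globally invertible and $f$ is not a priori known to be even injective or surjective. Everything else is a formal consequence of the classical Schwarz--Pick lemma on $\D$ combined with the local isometry property of the covering projection.
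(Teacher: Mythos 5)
The paper offers no proof of this statement: it is quoted as a classical form of the Schwarz lemma with a reference to Goluzin, so there is no argument of the author's to compare yours against. On its own merits, your reduction to the Schwarz--Pick lemma via universal covers is the standard route, and both the weak-contraction assertion and the implication ``holomorphic diffeomorphism $\Rightarrow$ isometry'' are correct as you present them.

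The converse direction, however, has a genuine gap exactly where you anticipated trouble. From ``$f$ is an isometry'' you retain only the infinitesimal consequence that $\tilde f$ preserves the Poincar\'e norm, conclude that $\tilde f$ is a M\"obius automorphism, and then claim the reverse inclusion $\Gamma'\subseteq \tilde f\,\Gamma\,\tilde f^{-1}$ by ``applying the same argument to $\tilde f^{-1}$, which covers a locally well-defined inverse of $f$.'' That clause begs the question: $\tilde f^{-1}$ is a lift of a map $D'\to D$ precisely when $\pi_D\circ\tilde f^{-1}$ is $\Gamma'$-invariant, which is equivalent to the inclusion you are trying to establish. Moreover, with only the hypotheses you actually use at that point the inclusion is false: $f(z)=z^2$ on $D=D'=\D\setminus\{0\}$ is a holomorphic covering map, hence a local isometry of the hyperbolic metric whose lift is an automorphism of $\D$, yet it is not injective, and there $\tilde f\,\Gamma\,\tilde f^{-1}$ has index $2$ in $\Gamma'$. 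The repair is to use the global (distance-preserving) isometry hypothesis rather than its infinitesimal shadow: distance preservation makes $f$ injective outright, so $f:D\to f(D)$ is a biholomorphism and hence an isometry onto $f(D)$ equipped with its own hyperbolic metric; since the hyperbolic density of $D'$ is strictly smaller than that of $f(D)$ at every point of $f(D)$ whenever $f(D)\subsetneq D'$, the equality $d_{D'}(f(x),f(y))=d_D(x,y)=d_{f(D)}(f(x),f(y))$ forces $f(D)=D'$. This yields the biholomorphism directly and bypasses the deck-group bookkeeping entirely.
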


For an open interval $I=(a,b)\subset \R$ and an angle $\theta\in (0,\pi)$, let $D_\theta(I)$ be the Euclidean disk uniquely defined by the following conditions: $D_\theta(I)\cap \R=I$ and angles of intersection of $\partial D_\theta(I)$ with $\R$ at the points $a,b$ are $\theta$ and $\pi-\theta$ respectively. A domain $\Omega_\theta(I)$ is said to be the union of $\{z\in D_\theta(I): \Im(z)>0\}$, its symmetric w.r.t. $\R$, and the common base interval $I$. Note that $\Omega_\theta(I)$ shrink to the interval $I$ as $\theta\to 0$.
Domains $\Omega_\theta(I)$ appear in the one-dimensional smooth dynamics since \cite{Su} thanks to the following observation (see e.g. \cite{MS}):
\begin{lemma}\label{fact-sull}
There exists a continuous strictly increasing function $\kappa: [0, \pi)\to [0, \infty)$ with $\kappa(0)=0$ as follows.
Let $\C_I:=\C\setminus (\R\setminus I)$ be the complex plane with two real slits
from $\infty$ to the end points of $I$ endowed with the hyperbolic metric $d_{\C_I}$. For every $\theta\in (0,\pi)$,
$$\Omega_\theta(I)=\{z\in\C_I: d_{\C_I}(z, I)<\kappa(\theta)\}.$$
\end{lemma}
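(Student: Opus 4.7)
\textbf{Proof proposal for Lemma \ref{fact-sull}.}

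The plan is to reduce everything to a normalized interval and compute the equidistant sets in an explicit upper half plane model. First, any real affine map $z \mapsto (2z - a - b)/(b - a)$ is a conformal (hence hyperbolic) isometry from $\C_I$ onto $\C_{(-1,1)}$ that sends $I = (a,b)$ to $(-1,1)$ and carries each $D_\theta(I)$ onto $D_\theta((-1,1))$ preserving the angle $\theta$. This reduces the problem to $I = (-1,1)$ and simultaneously shows that the resulting function $\kappa$ will not depend on $I$.

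For $I = (-1,1)$ I would use the conformal equivalence
\[
u \colon \H \to \C_I, \qquad u(\zeta) = \frac{1+\zeta^2}{1-\zeta^2},
\]
which factors as $u = \mu \circ \sigma$ where $\sigma(\zeta) = \zeta^2$ is a bijection $\H \to \C \setminus [0,\infty)$ and $\mu(s) = (1+s)/(1-s)$ is a Möbius transformation mapping $\C \setminus [0,\infty)$ bijectively onto $\C_I$ with $\mu(0) = 1$ and $\mu(\infty) = -1$. A direct check shows $u$ sends the positive imaginary axis $i\R_{>0}$ onto $I$. Since $u$ is a hyperbolic isometry, everything reduces to computing the equidistant sets in $\H$ of the geodesic $i\R_{>0}$. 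Using that dilations $\zeta \mapsto \lambda\zeta$ are isometries of $\H$ fixing $i\R_{>0}$ and that the circular arcs $|\zeta| = r$ are the perpendicular geodesics through its points, one obtains the classical formula
\[
d_\H(\zeta, i\R_{>0}) = \bigl|\log\tan(\arg\zeta/2)\bigr|,
\]
so the sub-level set of radius $c$ is the angular cone $C_c = \{\zeta \in \H : \phi_- < \arg\zeta < \phi_+\}$ with $\phi_\pm = 2\arctan(e^{\pm c})$ and $\phi_- + \phi_+ = \pi$.

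Next I would transport $C_c$ back to $\C_I$ by $u$. Under $\sigma$ a ray $\{\arg\zeta = \phi\}$ becomes the ray $\{\arg s = 2\phi\}$, and under the Möbius $\mu$ that ray becomes a Euclidean circular arc through $\pm 1$; since $\mu'(0) = 2 > 0$, the tangent direction of this arc at $w = 1$ is $2\phi$. A short sign computation using $\Im s = 2\Im w/|w+1|^2$ shows that the boundary ray at $\phi_- \in (0,\pi/2)$ maps to an arc in the upper half plane while the ray at $\phi_+ \in (\pi/2,\pi)$ maps to its complex conjugate in the lower half plane, so $u(C_c)$ is an $\R$-symmetric lens bounded by two circular arcs through $\pm 1$ of exactly the form described by $\Omega_\theta(I)$. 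Matching the tangent angle $2\phi_-$ at $w=1$ against the paper's prescribed angle $\pi - \theta$ at the right endpoint gives $\pi - \theta = 4\arctan(e^{-c})$, so
\[
\kappa(\theta) = \log\cot\!\left(\frac{\pi - \theta}{4}\right).
\]
This function is continuous on $[0,\pi)$, satisfies $\kappa(0) = 0$ and $\kappa(\theta) \to +\infty$ as $\theta \to \pi^-$, and has positive derivative $1/\bigl(2\sin((\pi-\theta)/2)\bigr)$, hence is strictly increasing. The main obstacle is the bookkeeping of angle conventions and half planes so that $u(C_c)$ is identified with the specific lens $\Omega_\theta(I)$ of the paper rather than some other two-arc region through $\pm 1$; the sign computation for $\Im s$ indicated above is what pins this down.
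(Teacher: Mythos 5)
The paper does not prove this lemma at all: it is quoted as a known observation of Sullivan, with the reference to \cite{Su} and \cite{MS}, so there is no in-paper argument to compare against. Your derivation is a correct, self-contained proof and is essentially the standard one. The reduction to $I=(-1,1)$ by a real affine isometry of $\C_I$ is right and does show $\kappa$ is independent of $I$; the factorization $u=\mu\circ\sigma$ with $\sigma(\zeta)=\zeta^2$ and $\mu(s)=(1+s)/(1-s)$ is a genuine conformal bijection from the upper half plane onto $\C_{(-1,1)}$ carrying the geodesic $i\R_{>0}$ onto $I$; and the distance formula $d(\zeta,i\R_{>0})=|\log\tan(\arg\zeta/2)|$ is correctly obtained by integrating $|d\zeta|/\Im\zeta$ along the perpendicular geodesics $|\zeta|=r$. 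The angle bookkeeping also comes out right: your matching $\pi-\theta=4\arctan(e^{-c})$ is the one forced by the requirement (stated in the paper) that $\Omega_\theta(I)$ shrinks to $I$ as $\theta\to 0$, i.e., $\kappa(0)=0$; the opposite convention would give a decreasing $\kappa$. Two quick sanity checks confirm the formula $\kappa(\theta)=\log\cot((\pi-\theta)/4)=\log\tan(\pi/4+\theta/4)$: at $\theta=\pi/2$ one gets $\Omega_{\pi/2}(I)=\D$ and $d_{\C_I}(i,I)=\log\cot(\pi/8)$ via $u(e^{i\pi/4})=i$, and as $\theta\to\pi$ the lens exhausts $\C_I$ while $\kappa\to\infty$. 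What your write-up buys beyond the citation is the explicit formula for $\kappa$ and its derivative $1/(2\sin((\pi-\theta)/2))$, which in particular makes the remark after Lemma \ref{l-sull} ($\theta'\sim c_*\theta$ as $\theta\to 0$) immediate. No gaps.
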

For the proof of Corollary \ref{t-finite}, we use Lemma \ref{l-sull} below
which is a consequence of Lemma \ref{fact-sull} and Theorem \ref{schw}.
Given a simply-connected domain $W$ containing a real interval $I$ and symmetric w.r.t. $\R$ we denote
$W_I:=W\cap \C_I$.
\begin{lemma}\label{l-sull}
Let $I\subset \R$ be an open interval and $V$ be a simply connected domain such that $V$ is symmetric w.r.t. $\R$ and $\overline{I}\subset V$. Then there exists two angles $0<\theta_*<\theta_*'<\pi/2$ as follows. For every $\theta\in (0, \theta_*)$ there exists $\theta'\in (0, \theta_*')$
such that,
for every univalent map $g: V\to\C$ which is real, i.e., $g(\bar z)=\overline{g(z)}$ for all $z\in V$, we have:
$$g(\Omega_\theta(I))\subset \Omega_{\theta'}(g(I)),$$
where both hand-sides are well-defined.
\end{lemma}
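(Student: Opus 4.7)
The plan is to apply Schwarz's lemma (Theorem~\ref{schw}) to $g$ viewed as a holomorphic map between slit domains, and then quantitatively compare two hyperbolic metrics on a neighborhood of $I$. Set $W := V\cap \C_I$ and $\delta := \mathrm{dist}(\overline I,\C\setminus V)>0$. Since the lens $\Omega_\theta(I)$ shrinks to $\overline I$ in the Euclidean sense as $\theta\to 0$ and is contained in $\C_I$ by definition, I can choose $\theta_*>0$, depending only on $V$ and $I$, so small that $\Omega_\theta(I)\subset W$ and $\Omega_\theta(I)\subset\{z:\mathrm{dist}(z,\overline I)<\delta/2\}$ for every $\theta\in (0,\theta_*)$. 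In particular the left-hand side $g(\Omega_\theta(I))$ of the claimed inclusion is well defined.

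The hypothesis that $g:V\to\C$ is real and univalent implies that $J:=g(I)$ is an open real interval, $g(V)\cap \R = g(V\cap \R)$, and from this a short verification gives $g(W)\subset \C_J$. Thus $g:W\to\C_J$ is holomorphic, and Schwarz's lemma yields
\[
d_{\C_J}(g(z_1),g(z_2))\le d_W(z_1,z_2).
\]
Taking $z_2\in I$ (so $g(z_2)\in J$) and infimizing,
\[
d_{\C_J}(g(z),J)\le d_W(z,I)\qquad\text{for every }z\in W.
\]

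It remains to upgrade this to a bound in terms of $d_{\C_I}(z,I)$. By the Koebe estimates (Theorem~\ref{koebe}), the simply connected domain $\C_I$ satisfies $\rho_{\C_I}(z)\ge 1/(4\,\mathrm{dist}(z,\partial\C_I))$, while the disk comparison gives $\rho_W(z)\le 2/\mathrm{dist}(z,\partial W)$. For $z\in\Omega_\theta(I)$ with $\theta\le\theta_*$, either $\mathrm{dist}(z,\partial\C_I)\le \delta/2$, in which case the nearest point of $\partial W$ actually lies on $\partial \C_I$ and $\mathrm{dist}(z,\partial W)=\mathrm{dist}(z,\partial\C_I)$; or $\mathrm{dist}(z,\partial W)\ge \delta/2$ while $\mathrm{dist}(z,\partial\C_I)$ is bounded by $\mathrm{diam}\,\Omega_{\theta_*}(I)\le |I|+O(\delta)$. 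In both cases $\rho_W(z)/\rho_{\C_I}(z)\le C$ for a constant $C=C(V,I)$ independent of $g$. Since the $\C_I$-geodesic from $z\in\Omega_\theta(I)$ to the nearest point of $I$ consists of points at $d_{\C_I}$-distance $<\kappa(\theta)$ from $I$, it remains inside $\Omega_\theta(I)\subset W$; integrating $\rho_W\le C\rho_{\C_I}$ along it gives $d_W(z,I)\le C\,d_{\C_I}(z,I)<C\kappa(\theta)$. Combining,
\[
d_{\C_J}(g(z),J)<C\kappa(\theta),
\]
which by Lemma~\ref{fact-sull} means $g(z)\in\Omega_{\theta'}(J)$, where $\theta'$ is defined by $\kappa(\theta')=C\kappa(\theta)$. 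Since $\kappa$ is continuous, strictly increasing and vanishes at $0$, $\theta'\to 0$ as $\theta\to 0$, so after possibly shrinking $\theta_*$ we may pick $\theta_*'\in(\theta_*,\pi/2)$ with $\theta'\in(0,\theta_*')$ for all $\theta\in(0,\theta_*)$.

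I expect the main obstacle to be this uniform density comparison $\rho_W\le C\rho_{\C_I}$ on $\Omega_\theta(I)$. Although heuristically clear—because near $I$ the relevant boundary is the same slit $\R\setminus I$ for both $W$ and $\C_I$—the set $\Omega_\theta(I)$ is not compact in $\C_I$ (it touches $\partial\C_I$ at the endpoints of $I$), so the bound cannot be read off from plain compactness. The two-sided Koebe estimate handles precisely this, after which Schwarz's lemma and Lemma~\ref{fact-sull} combine cleanly to give the conclusion.
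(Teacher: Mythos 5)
Your proof is correct and rests on the same two pillars as the paper's: the Schwarz lemma for the hyperbolic metric and a comparison of hyperbolic densities near $I$ via the $1/\dist(\cdot,\partial D)$ estimates, all converted into statements about the lens domains through Lemma~\ref{fact-sull}. The organization differs in one worthwhile respect. The paper first normalizes so that $I=(0,1)=g(I)$ and invokes Koebe compactness of the normalized family to get a constant $c_*$ uniform over all the \emph{image} domains $g(V_I)$; you instead map directly into $\C_{g(I)}$ by Schwarz and place the entire density comparison on the \emph{source} side ($\rho_{V\cap\C_I}$ versus $\rho_{\C_I}$), where $g$ does not appear, so uniformity in $g$ is automatic and the normalization/compactness step disappears. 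You are also more explicit than the paper on two small points it glosses over: that the pointwise bound $\rho_W\le C\rho_{\C_I}$ must be integrated along a path remaining in the region where it holds (the $\C_I$-geodesic to the nearest point of $I$ stays in $\Omega_\theta(I)$ by Lemma~\ref{fact-sull}), and that the upper estimate $\rho_W\le 2/\dist(\cdot,\partial W)$ requires no simple connectivity of $W=V\cap\C_I$, only monotonicity under inclusion of disks. Both arguments are sound; yours is slightly more economical.
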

\begin{proof}
First, after pre- and postcomposing $g$ by linear maps preserving the real line, one can assume from the beginning the set of maps
$\mathcal{F}=\{g\}$ under consideration consists of real univalent $g:V\to\C$  such that
$I=(0,1)=g((0,1))$. (Note by the way that the identity map is in $\mathcal{F}$.)
The Koebe distortion theorem then implies that $\mathcal{F}$ is compact, where the convergence is uniform convergence on
compacts in $V$. In turn, the compactness implies that there exists $\eps>0$ such that, for all $g\in\mathcal{F}$, $g(V)$ contains the Euclidean
$2\eps$-neighborhood of $[0,1]$. Hence, if $U$ is the $\eps$-neighborhood of $[0,1]$, then
\begin{equation}\label{comp}
U\subset g(V) \mbox{ and } \dist(\partial g(V), U)\ge \eps \mbox{ for all } g\in\mathcal{F}.
\end{equation}
Let us show that there exists $c_*>1$ such that
\begin{equation}\label{dist}
1\le \frac{d_{g(V_I)}(z)}{d_{\C_I}(z)}\le c_* \mbox{ for all } z\in U_I \mbox{ and all } g\in\mathcal{F}.
\end{equation}
Indeed, the left hand-side inequality holds because $g(V)_I\subset \C_I$. For the other inequality, we use that $1/(2\dist(z,\partial D))\le d_D(z)\le 2/\dist(z,\partial D)$ for any
simply-connected domain $D\neq\C$ and any $z\in D$ \cite{BM}.
This universal bound coupled with the fact that $\partial g(V_I)\cap\R\subset \partial \C_I$  and with (\ref{comp}) imply (\ref{dist}).

Now, fix $\theta_*'>0$ small enough so that $\Omega_{\theta_*'}(I)\subset U$ and, for every $\theta'\in (0, \theta_*')$, define $\theta=\kappa^{-1}(\kappa(\theta')/c_*)$
where $\kappa$ is the function appeared in Lemma \ref{fact-sull}. As $0<\theta'\le \theta_*'$, also $\Omega_\theta(I)\subset U$.
Introduce the following notation: $B^D_r$ is the $r$-neighborhood of the interval $I$ in the hyperbolic metric of a domain $D$ that contains $\overline{I}$.
Using (\ref{dist}), we obtain first that
$$\Omega_\theta(I)=B^{\C_I}_{\kappa(\theta)}\subset B^{V_I}_{c_*\kappa(\theta)}.$$
Then, using that $g:V_I\to g(V_I)$ is a hyperbolic isometry and again (\ref{dist}), we have:
$$g(\Omega_\theta(I))\subset g(B^{V_I}_{c_*\kappa(\theta)})=
B^{g(V_I)}_{c_*\kappa(\theta)}\subset B^{\C_I}_{c_*\kappa(\theta)}=
\Omega_{\kappa^{-1}(c_*\kappa(\theta))}(I)=\Omega_{\theta'}(I).$$
\end{proof}
\begin{remark} Note that $\theta'>\theta$ so that $g(\Omega_\theta(I))$ is not necessary contained in $\Omega_\theta(I)$.
On the other hand, if $\diam g(I)/\diam I$ is small enough (depending on $\theta'$), then $\overline{g(\Omega_\theta(I))}\subset \Omega_\theta(I)$, and one can apply the same $g$ again.
Using the explicit formula for the function $\kappa$, e.g., \cite{MS}, one can see that $\theta'\sim c_*\theta$ as $\theta\to 0$.
\end{remark}
\section{Proofs of the main results}\label{proofs}
\subsection{Theorem \ref{t-homeo}\label{ss-t-homeo}}
First, if $h: D\to\C$ is anti-holomorphic, then $h^*:D\to \C$ defined as $h^*(z)=\overline{f(z)}$ is holomorphic.
If $h$ is either holomorphic or anti-holomorphic, denote $||h'||(z)=|h'(z)|$ in the former case and $||h'||(z)=|(h^*)'(z)|$
in the latter one.

Recall that $\mathcal{I}_V=\{f:V\to\C | f(A)=A, f \mbox{ conformal injective}\}$.
\begin{lemma}\label{l-conf}
Let $D$ be a component of $V$ and $t>0$. Then, for every $t>0$, the set $\mathcal{H}_{t,D}$ of restrictions $f|_D$ over all $f\in\mathcal{I}_V$ such that
$||f'||(x)\ge t$ for some $x\in K\cap D$ is finite.
\end{lemma}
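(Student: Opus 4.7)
The plan is to argue by contradiction. Suppose $\mathcal{H}_{t,D}$ contains an infinite sequence $\{f_n|_D\}$ of pairwise distinct restrictions with $||f_n'||(x_n)\ge t$ at points $x_n\in K\cap D$. I will extract a non-trivial (in the sense of Definition~\ref{d-symm}) infinite family of local holomorphic symmetries of $K$ on $D$, and then invoke Theorem~\ref{t-symm}: the non-laminarity of $K$ rules out case (2), forcing the family to be finite---a contradiction. Note that $K$ is $J$-like relative to $D$ because density of attracting fixed points of local symmetries in $V\cap K=K$ immediately passes to the open subset $D\cap K$.

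First, by passing to a subsequence I may assume all $f_n|_D$ are holomorphic (the anti-holomorphic case is handled identically after pre-composition with $z\mapsto\bar z$, which transforms the setup to $\bar D$ and $\bar K$, both satisfying the same hypotheses). Next, since $K$ is compact I may pass to a further subsequence with $x_n\to x_\infty\in K$. As $D$ is simultaneously open and closed in the open set $V\supset K$, one has $x_\infty\in\bar D\cap V=D$, hence $x_\infty\in K\cap D$. Fix $r>0$ with $\bar B(x_\infty,2r)\subset D$ and, using that $K$ has no isolated points, fix a point $p\in K\cap B(x_\infty,r)$ with $p\ne x_\infty$.

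The main step is to show normality of $\{f_n\}$ on $D$, for which the crux is an upper bound on $|f_n'(x_n)|$. Applying the Koebe growth theorem (the standard lower-estimate companion of Theorem~\ref{koebe}(1)) to the univalent map $f_n$ on $B(x_n,2r)\subset D$ gives, for all large $n$,
$$|f_n(p)-f_n(x_n)|\ \ge\ c\,|f_n'(x_n)|,$$
where $c>0$ depends only on $|p-x_\infty|$ and $r$. But $f_n(p),f_n(x_n)\in K$, so the left side is bounded by $\diam K$, yielding $|f_n'(x_n)|\le (\diam K)/c$. Combined with the lower bound $|f_n'(x_n)|\ge t$, with $f_n(x_n)\in K$, and with the Koebe distortion bound of Theorem~\ref{koebe}(2), this implies that $\{f_n\}$ is uniformly bounded on compact subsets of $D$; normality then follows from Montel's theorem.

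A convergent subsequence $f_n\to f_\infty$ satisfies $|f_\infty'(x_\infty)|\ge t>0$, so $f_\infty$ is non-constant, and the same reasoning applied to any further subsequence shows that no limit function can be constant. Thus (a sub-family of) $\{f_n|_D\}$ is an infinite non-trivial family of symmetries of $K$ on $D$, contradicting Theorem~\ref{t-symm} together with the non-laminarity of $K$. I expect the main obstacle to be precisely the upper bound on $|f_n'(x_n)|$; this step depends essentially on the invariance $f_n(K)=K$ and on the no-isolated-points hypothesis (through the auxiliary point $p$).
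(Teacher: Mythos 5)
Your treatment of the holomorphic maps is essentially the paper's argument: both proofs bound $\|f'\|$ above on a compact neighborhood of $K\cap D$ by combining Koebe distortion with the fact that $f$ carries two distinct points of $K\cap D$ (which exist since $K$ has no isolated points) to points of $K$, whence $\diam K$ controls $|f_n'(x_n)|$; normality plus the lower bound $\ge t$ then gives a non-trivial family of symmetries, and Theorem~\ref{t-symm} with non-laminarity forces finiteness. That part is fine.

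The gap is in your one-line dismissal of the anti-holomorphic case. If $f$ is anti-holomorphic on $D$, write $f(z)=h(\bar z)$ with $h$ holomorphic on $\bar D$; then pre-composition with $z\mapsto\bar z$ produces the holomorphic map $h$, but $h$ carries $\bar K\cap\bar D$ onto $K\cap f(D)$ --- it is a local biholomorphism \emph{from $\bar K$ to $K$}, not a symmetry of $\bar K$ (nor of $K$) in the sense of Definition~\ref{d-symm}, unless $K=\bar K$. Post-composing with conjugation instead gives a holomorphic map sending $K$ to $\bar K$, and doing both restores anti-holomorphy. So the ``transformed setup'' is not an instance of the same hypotheses, and Theorem~\ref{t-symm}, which concerns symmetries of a single compact, cannot be invoked for the family $\{f_n(\bar z)\}$. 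The standard repair --- and what the paper does --- is to fix one anti-holomorphic member $f_0$ of the subfamily and consider $\{f\circ f_0^{-1}\}$ on $f_0(D')$: these are holomorphic, form a normal non-trivial family by the same derivative bounds, and are genuine symmetries of $K$ on $f_0(D')$ since each maps $K\cap f_0(D')$ onto $K\cap f(D')$. Finiteness of that family then yields finiteness of the anti-holomorphic subfamily. With this correction your argument goes through.
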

\begin{proof}
Fix a subdomain $D'$ of $D$ such that $D\cap K\subset D'\subset\overline{D'}\subset D$.
It follows from the Koebe distortion theorem that, for some $\lambda>1$ and
any conformal $h: D'\to \C$,
\begin{equation}\label{distor0}
\frac{||h'||(x)}{||h'||(y)}\le \lambda \mbox{ for all } x,y\in \overline{D'}.
\end{equation}
This implies that there exist $0<s<S<\infty$ such that, for every $f\in\mathcal{H}_t$,
\begin{equation}\label{bd0}
s\le ||f'||(x)\le S \mbox{ for all } x,y\in \overline{D'}.
\end{equation}
Indeed, the left hand-side inequality holds with $s=t/\lambda$. For the other inequality,
assuming the contrary and using (\ref{distor0}), we find a sequence $f_n\in\mathcal{H}_{t,D}$
so that $||f_n'||\to\infty$ uniformly in $D'$. Let $K_D=D\cap K$. As $K_D\subset D'$ and $K_D$ has no isolated points, then
$\diam(K_D)>0$. Therefore, $\diam K\ge \diam f_n(K_D)\to\infty$, a contradiction which proves (\ref{bd0}).

Now, $\mathcal{H}_{t,D}$ splits into two disjoint subsets $\mathcal{H}^{+}_t$, $\mathcal{H}^{-}_t$ consisting of holomorphic and anti-holomorphic maps, respectively.
Therefore, by (\ref{bd0}), $\mathcal{H}^{+}_t$ is a non-trivial normal family on $D'$, hence, finite (remind that $K$ is a J-like non-laminar compact).
To prove that $\mathcal{H}^{-}_t$ is finite,
let us fix some $f_0\in\mathcal{H}^{-}_t$ and consider the set of functions
$\mathcal{H}':=\{f\circ f_0^{-1}: f\in \mathcal{H}^{-}_t\}$ which are well defined on a connected neighborhood $V':=f_0(D')$ of $K':=(f_0)(K_D)$.
Moreover, $\mathcal{H}'$ consists of holomorphic (univalent) functions and (\ref{bd0}) implies that this is a normal non-trivial family on $V'$.
It is easy to see that each $h=f\circ f_0^{-1}\in\mathcal{H}'$ is a symmetry of $K$ on $V'$ because $K\cap V'=K'$ and $h: V'\to f(D')$ is a homeomorphism such that $h(K')=f(D')\cap K$. Hence, $\mathcal{H}'$ is finite by Theorem \ref{t-symm}
which implies that $\mathcal{H}^{-}_t$ is finite too. That is, $\mathcal{H}_{t,D}$ is the union of two finite sets.
\end{proof}
We prove Theorem \ref{t-homeo} by induction on the number $m$ of components of $V$. Note that $m<\infty$ because $K$ is compact and each component of $V$ intersects $K$.
We use the following fact:
given a compact $K$ which is not a singleton, and a neighborhood $V$ of $K$ such that all components $D_1,...,D_m$ of $V$ intersects $K$
there is $T=T(K,V)>0$ as follows: given $f\in\mathcal{I}_V$, there is $x_f\in D_j$
for some $j\in\{1,...,m\}$, such that $||f'||(z_f)\ge T$.
Indeed, otherwise, using the Koebe distortion on each domain $D_i$, we construct a sequence of homeomorphisms of $K$ which tend to a constant function uniformly on $K$, a contradiction.

{\it Basis of induction:} $m=1$, i.e. $V$ is connected.
Then $\mathcal{I}_V=H_{T,V}$ where $T=T(K,V)>0$ and we apply Lemma \ref{l-conf}.

{\it Step of induction:} assume that the theorem holds for every J-like non-laminar compact $K$ with no isolated points and for every neighborhood $V$
of $K$ having at most $m$ components. Let us prove that $\mathcal{I}_V$ is finite for $V$ having $m+1$ components $D_1,...,D_{m+1}$.
So we fix such $K$ and $V$ and find $T=T(K,V)$.
For every $f\in\mathcal{I}_V$ there exist $j\in\{1,...,m+1\}$ and $x_f\in D_j\cap K$ such that $||f'||(x_f)\ge T$.
If we assume, by a contradiction, that the set $\mathcal{I}_V$
contains an infinite sequence of different conformal maps,
then we find some fixed $j\in\{1,...,m+1\}$ and an infinite subsequence $\{f_n\}\subset \mathcal{H}_{T, D_j}$ of this sequence.
Then,
by Lemma \ref{l-conf} the set of restrictions $f_n|_{D_j}$ contains only finitely many different maps, that is, there is a further subsequence
$\{f_{n_k}\}$ and a conformal map
$h:D_j\to\C$ such that $h\equiv f_{n_k}|_{D_j}$ for all $k$. Let
$\tilde K=K\setminus h(K\cap D_j)$ and $\tilde V=\cup_{1\le i\le m+1, i\neq j}D_i$. Then $\tilde V$ is a neighborhood of a J-like non-laminar compact $\tilde K$ which, on the one hand, consists of $m$ components
$D_i$, $i\neq j$, on the other hand, $\mathcal{I}_{\tilde V}$ contains infinitely many different maps
$\tilde f_k$ where $\tilde f_k\equiv f_{n_k}$ on $D_i$, $i\neq j$, a contradiction with the induction assumption.
\subsection{Theorem \ref{t-finite}+Corollary \ref{c-finite}}
{\it Proof of Theorem \ref{t-finite}}.
First, the conditions of items (2)+(3) mean that the marked IFS $G_*$ satisfies either the SSC or the strong OSC with the open set $U$.
In the latter case, let us fix a component $D$ of $U$ which intersects $A$. By parts 1 and 3 of Lemma \ref{l-attrjlike} applied to $G_*$
along with Theorem \ref{t-symm},
we conclude that every non-trivial family of symmetries of $A$ on $D$ is finite (recall that $A$ is non-laminar).
Denote $\mathcal{G}^+=\{g:\Omega_G\to\C: G\in\mathcal{G}\}$.
Fix a smaller connected neighborhood $V'$ of $A$ such that $\overline{V'}\subset V$ and $\overline{U}\subset V'$.
It follows from the Koebe distortion theorem as in the proof of (\ref{bd0}) of Lemma \ref{l-conf} that
there exists $0<S<\infty$ such that, for every $g\in\mathcal{G}^+$ and all $x\in \overline{V'}$,
\begin{equation}\label{bd}
|g'(x)|\le S.
\end{equation}
Next, by the conditions on the set $U$ and by item (2), we can apply part 2 of Lemma \ref{l-attrjlike} and conclude that every $g\in\mathcal{G}^+$ is a symmetry of $A$ on every component of $U$, in particular, on $D$.
Along with (\ref{bd}) this means
that
the family of restrictions
$\mathcal{D}=\{g|_D: g\in \mathcal{G}^+\}$ is a normal family of symmetries of $A$ on $D$.

As $D\subset V$ and $V$ is connected, by the Uniqueness theorem it is enough to show that the family $\mathcal{D}$ is finite. By Theorem \ref{t-symm}, this is reduced to proving that
$\mathcal{D}$ contains no sequence converging to a constant function in $D$. Assuming the contrary, let $\{G_n\}_{n=1}^\infty\subset\mathcal{G}$
do contain a sequence of maps converging to a constant uniformly on compacts in $D$. Note that by (\ref{bd})) and since $\overline{U}\subset V'\subset\overline{V'}\subset V$ the uniform converges on compacts in $D$ is equivalent to the uniform convergence on $\overline{V'}$.
So we say below that a sequence in $\mathcal{D}$ converges to a holomorphic function meaning uniformly on  $\overline{V'}$.
Passing to a subsequence, one can assume that
$\# G_n=m$ for all $n$ and some $m\ge 2$. Let $G_n=(g_1^{(n)},...,g_m^{(n)})$. We arrive at a contradiction by constructing
an infinite subsequence of $G_n$ which is constant, i.e., consisting of the same $m$ functions for all $n$ along the subsequence.

For each $n$, $\cup_{i=1}^m g_i^{(n)}(A)\supset A$. Therefore, there is a converging sequence of maps $\{g_{i_{n_r}}^{(n_r)}\}\subset\mathcal{D}$ which do not tend to a constant, hence,
by Theorem \ref{t-symm}, this sequence is finite. Replacing $\{G_n\}$ by its subsequence $\{G_{n_r}\}$, re-indexing maps in each $G_{n_r}$ if necessary, and re-naming it by (new) sequence $\{G_n\}$,
one can assume that $g_1^{(n)}\equiv g_1$ for all $n$ and some $g_1:V\to \C$. Now, $g_1(\overline{U})=g_1^{(n)}(\overline{U})$ is disjoint with $\cup_{i=2}^m g_i^{(n)}(U)$. As $A\subset \overline{U}$, this implies that $g_1(A)$ is a proper subset of $A$, i.e.,
$A_1:=A\setminus g_1(A)\neq\emptyset$. But $\cup_{i=2}^m g_i^{(n)}(A)\supset A_1$. This allows us to repeat the procedure with the new $\{G_n\}$ arriving at its subsequence (again re-named by $\{G_n\}$)
such that $g_1^{(n)}\equiv g_1$ and $g_2^{(n)}\equiv g_2$ for all $n$. As $g_1(\overline{U})\cup g_2(\overline{U})=g_1^{(n)}(\overline{U})\cup g_2^{(n)}(\overline{U})$ is disjoint with $\cup_{i=3}^m g_i^{(n)}(U)$, we get
$A_2:=A\setminus (g_1(A)\cup g_2(A))\neq\emptyset$, and can repeat the procedure again. After $m$ steps we construct a constant subsequence of the original sequence $\{G_n\}$, a contradiction.

{\it Proof of Corollary \ref{c-finite}}.

For the family $\mathcal{G}$ that satisfies item 1 and either item 2a or item 2b, we construct a domain $U$ as in Theorem \ref{t-finite} and then check all the conditions of the latter to get the desired conclusion.

Start by noting that in both cases, item 1 guarantees (as in the proof (\ref{bd0}) of Lemma \ref{l-conf}
that, for every domain $V'$ with $\overline{V'}\subset V$
there exists $S>1$ such that $|g'(x)|\le S$ for all $x\in\overline{V'}$ and any $g\in \mathcal{G}^+$.

{\it So assume we have 2a}. Reducing $r$ if necessary, one can assume that $\dist(\partial V, A)>3r$. For $V'$ to be the $2r$-neighborhood of $A$,
find the constant $S>1$ and define $U$ to be the $r/S$-neighborhood of $A$. Then $g(U)$ are pairwise disjoint for all $g\in G$ and every $G\in\mathcal{G}$. Then all the conditions of Theorem \ref{t-finite} are satisfied: (3) is ok with just constructed set $U$, (1) holds because of the bound (\ref{2a}) and (4) holds because every $G\in\mathcal{G}$ has the SSC.

{\it Now assume that the conditions of 2b hold}.

Let $[a,b]$ be the minimal closed interval containing $A$. The SSC implies that, for any $G\in\mathcal{G}_\R$ and any two different
$g,\tilde g\in G$, we have:
\begin{equation}\label{disj}
g(I)\cap \tilde g(I)=\emptyset \mbox{ where } I=(a,b).
\end{equation}
Let's apply Lemma \ref{l-sull} and find two angles $0<\theta<\theta'<\pi/2$ such that
$g(\Omega_\theta(I))\subset \Omega_{\theta'}(g(I))$ for all
$g\in \mathcal{G}^+:$. Note that $\Omega_{\theta'}(J)\cap \Omega_{\theta'}(\tilde J)$ for any two disjoint open
intervals $J, \tilde J$ because $\theta'<\pi/2$. Therefore, we can let
$$U=\Omega_\theta(I)$$
to satisfy the condition (2) of Theorem \ref{t-finite}, while conditions (\ref{sup}), (1) and (3) of Theorem \ref{t-finite} automatically hold because they are assumed in the corollary (in the considered case 2b). Thus Theorem \ref{t-finite} applies in this case too.
\subsection{Theorems \ref{t-symmifs}-\ref{t-finitegen}: the key lemma}
Theorems \ref{t-symmifs}-\ref{t-finitegen} fairly easily follow from Theorem \ref{t-symm} and the next
\begin{lemma}\label{l-symmifs}
Let $G=\{g_i\}_{i\in\Lambda}$ and
$F=\{f_j\}_{j\in\tilde\Lambda}$ be two finite holomorphic IFS with the SSC which are defined on domains $\Omega_G$ and $\Omega_F$ respectively.
Suppose they
share the same attractor $A$.
Let $\rho:=\min (\rho_G, \rho_F)$
and $N$ be so that for any $n\ge N$ and any ${\bf w}\in\Lambda^n$,
$$\sup\{|g_{\bf w}'(x)|: x\in A\}\le s_F$$
where
$$s_F=\inf\{|f'_j(y)|: j\in\tilde\Lambda, y\in A\}.$$
(Such $N$ exists because $g_{\bf w}'(x)\to 0$ as $|{\bf w}|\to\infty$ uniformly in ${\bf w}\in\Lambda^*$ and $x\in A$. Note also that
$s_F<1$.)

Let $r=(3-\sqrt{8})\rho$. There exists
a function ${\bf v}:A \times \cup_{n\ge N}\Lambda^n\to \cup_{k=1}^\infty \tilde\Lambda^k$ as follows. First,
\begin{equation}\label{wvinf}
\lim_{|{\bf w}|\to\infty} |{\bf v}(a, {\bf w})|=\infty
\end{equation}
uniformly in $a\in A$, ${\bf w}\in\Lambda^*$.
For every $a\in A$ and ${\bf w}\in\Lambda^n$ with $n\ge N$, the map
$$H_{{\bf w}, {\bf v}(a,{\bf w})}:=f_{{\bf v}(a,{\bf w})}^{-1}\circ g_{\bf w}$$
is defined and univalent in $B(a, r)$ and such that
\begin{equation}\label{lsymmifs1}
B(H_{{\bf w}, {\bf v}(a,{\bf w})}(a), \frac{s_F}{25}\rho)\subset H_{{\bf w}, {\bf v}(a,{\bf w})}(B(a,r))\subset B(H_{{\bf w}, {\bf v}(a,{\bf w})}(a),\rho).
\end{equation}
Furthermore, $H_{{\bf w}, {\bf v}(a,{\bf w})}$ is a symmetry of $A$ on $B(a, r)$.
Finally, there exists
$K=K(F, G)<\infty$
such that, for every $a\in A$,
the number of different functions $H_{{\bf w}, {\bf {v}}(a,{\bf w})}$ in $B(a, r)$, over all ${\bf w}\in\Lambda^*$ with $|{\bf w}|\ge N$ is at most $K$.
\end{lemma}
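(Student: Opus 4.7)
The plan is to construct $\mathbf{v}(a,\mathbf{w})$ by a stopping-time argument that matches the derivative scale of $g_{\mathbf{w}}$ at $a$ to that of $f_{\mathbf{v}}$ at its preimage of $g_{\mathbf{w}}(a)$, to derive both inclusions in (\ref{lsymmifs1}) from the Koebe distortion theorem with the specifically calibrated radius $r=(3-\sqrt{8})\rho$, and to deduce the finiteness bound from Theorem \ref{t-symm} applied to a normal family of symmetries of $A$. Fix $a\in A$ and $\mathbf{w}\in\Lambda^n$ with $n\ge N$; set $b:=g_{\mathbf{w}}(a)\in A$ and $\lambda:=|g'_{\mathbf{w}}(a)|\le s_F<1$. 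The SSC for $F$ together with $A=\cup_{j\in\tilde\Lambda}f_j(A)$ (a disjoint union, by part 1 of Lemma \ref{l-attrjlike}) yields a unique infinite address $j_1 j_2\ldots\in\tilde\Lambda^{\N}$ with $b\in f_{(j_1,\ldots,j_k)}(A)$ for every $k\ge 1$; writing $b_k:=f^{-1}_{(j_1,\ldots,j_k)}(b)\in A$, the derivatives $\mu_k:=|f'_{(j_1,\ldots,j_k)}(b_k)|$ strictly decrease at each step by a factor in $[s_F,S_F]$, where $S_F:=\sup\{|f'_j(y)|:j\in\tilde\Lambda,\ y\in A\}<1$. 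I define $\mathbf{v}(a,\mathbf{w}):=(j_1,\ldots,j_k)$ with $k\ge 1$ the largest index satisfying $\mu_k\ge\lambda$ (well defined because $\mu_0=1>\lambda$ and $\mu_k\to 0$), and set $\mu:=\mu_k$, $b':=b_k$. This yields the two-sided bound $\mu\in[\lambda,\lambda/s_F]$; since $\sup\{|g'_{\mathbf{w}}(x)|:x\in A\}\to 0$ uniformly as $|\mathbf{w}|\to\infty$, the bound $\mu\le\lambda/s_F$ gives (\ref{wvinf}).

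The crucial identity is $t/(1-t)^2=1/4$ for $t=3-\sqrt{8}=r/\rho$. Theorem \ref{koebe} applied to the univalent $g_{\mathbf{w}}$ on $B(a,\rho_G)\supset B(a,\rho)$ yields $g_{\mathbf{w}}(B(a,r))\subset B(b,\rho\lambda/4)$, and the Koebe $1/4$ theorem applied to $f_{\mathbf{v}}$ on $B(b',\rho_F)\supset B(b',\rho)$ yields $f_{\mathbf{v}}(B(b',\rho))\supset B(b,\rho\mu/4)\supset B(b,\rho\lambda/4)$, the last inclusion by $\mu\ge\lambda$. Therefore $H:=f^{-1}_{\mathbf{v}}\circ g_{\mathbf{w}}$ is well defined and univalent on $B(a,r)$ and maps it into $B(b',\rho)=B(H(a),\rho)$, which is the upper inclusion of (\ref{lsymmifs1}). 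For the lower inclusion I apply Koebe $1/4$ directly to $H$ on $B(a,r)$: as $|H'(a)|=\lambda/\mu\ge s_F$, one has $H(B(a,r))\supset B(H(a),r|H'(a)|/4)\supset B(H(a),rs_F/4)$, and the arithmetic inequality $25(3-\sqrt{8})\ge 4$ (equivalently $71\ge 50\sqrt{2}$, which holds since $71^2=5041>5000=(50\sqrt{2})^2$) delivers $r/4\ge\rho/25$, hence $B(H(a),s_F\rho/25)\subset H(B(a,r))$. The symmetry property of $H$ on $B(a,r)$ follows from Subsection \ref{ss-symm}: $g_{\mathbf{w}}$ is a symmetry of $A$ on $B(a,\rho_G)\supset B(a,r)$, and $f^{-1}_{\mathbf{v}}$ is a symmetry on $f_{\mathbf{v}}(B(b',\rho_F))\supset g_{\mathbf{w}}(B(a,r))$, and the class of symmetries is closed under composition.

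For finiteness, fix $a\in A$ and consider $\mathcal{H}_a:=\{H_{\mathbf{w},\mathbf{v}(a,\mathbf{w})}:|\mathbf{w}|\ge N\}$. Every element is univalent on $B(a,r)$ with image in $B(H(a),\rho)$ (so bounded image since $H(a)\in A$) and derivative at $a$ of modulus in $[s_F,1]$; Theorem \ref{koebe}(1) then gives normality of $\mathcal{H}_a$, and any subsequential limit $H^*$ satisfies $|(H^*)'(a)|\ge s_F>0$, so it is non-constant and $\mathcal{H}_a$ is non-trivial in the sense of Definition \ref{d-symm}. By Lemma \ref{l-attrjlike}(1) the attractor $A$ is a J-like Cantor set, in particular non-laminar, so Theorem \ref{t-symm} forces $\mathcal{H}_a$ to be finite. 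To promote this to a bound $K=K(F,G)$ independent of $a$, I cover the compact $A$ by finitely many balls $B(a_1,r/2),\ldots,B(a_M,r/2)$ with $a_k\in A$; for any $a\in A$, picking $a_k$ with $a\in B(a_k,r/2)$ gives $B(a_k,r/4)\subset B(a,r)$, and distinct elements of $\mathcal{H}_a$ restrict to distinct holomorphic functions on $B(a_k,r/4)$ by the identity theorem. Applying the above normality-plus-Theorem \ref{t-symm} argument once per $k$ to the normal family of restrictions of all such $H$ (over all $a,\mathbf{w}$) to $B(a_k,r/4)$ produces a finite bound $N_k$, and $K:=\max_k N_k$ depends only on $F$ and $G$. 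The main obstacle is precisely the joint calibration of the stopping rule and the radius $r=(3-\sqrt{8})\rho$: the identity $t/(1-t)^2=1/4$ together with $\mu\ge\lambda$ makes the Koebe upper bound for $g_{\mathbf{w}}(B(a,r))$ fit inside the Koebe $1/4$ lower bound for $f_{\mathbf{v}}(B(b',\rho))$ with no slack, while the companion estimate $\mu\le\lambda/s_F$ is what powers the lower inclusion with the universal constant $s_F/25$; a secondary technical point is the uniformity in $a$, which is bridged by the finite cover and identity-theorem argument above since Theorem \ref{t-symm} itself furnishes no explicit bound.
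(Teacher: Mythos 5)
Your proof is correct and follows essentially the same route as the paper's: the same stopping rule on the derivatives $|f'_{V_k}(b_k)|$ along the unique $F$-address of $g_{\bf w}(a)$, the same Koebe calibration via $t/(1-t)^2=1/4$ at $t=3-\sqrt{8}$ yielding both inclusions of (\ref{lsymmifs1}), and the same appeal to Theorem \ref{t-symm} together with a compactness/covering step to make the bound $K$ uniform in $a$. The only slip is the incidental claim that $S_F=\sup\{|f_j'(y)|\}<1$ on $A$ (Euclidean derivatives of a holomorphic IFS need not be everywhere below $1$, as noted in Subsection \ref{ss-def}), but your argument never actually uses this, since the stopping time only needs $\mu_k\to 0$ and $\mu_{k+1}\ge s_F\mu_k$.
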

\begin{proof} The following construction is a somewhat similar to \cite{le}, \cite{lp}.
For $\rho=\min (\rho_G, \rho_F)$, all maps $g_{\bf w}$, $f_{\bf v}$, $|{\bf w}|, |{\bf v}|<\infty$, are local symmetries on $B(a,\rho)$ for every $a\in A$.
Let $n\ge N$, ${\bf w}\in\Lambda^n$ and $a\in A$.
Let $a({\bf w})=g_{\bf w}(a)$. Since $a({\bf w})$ is a point of the attractor $A$ of $F$ and the SSC holds for $F$,
there exists a unique infinite word $(j_1,j_2,...,j_k,...)\in\tilde\Lambda^*$ such that, for each
finite $V_k=(j_1,...,j_k)$
there is $b_k\in A$ so that $f_{V_k}(b_k)=a(w)$.
Hence, all maps $f_{V_k}:B(b_k,\rho)\to\C$ are well-defined and $f_{V_k}(b_k)=a({\bf w})$.
Note that $b_k=f_{j_{k+1}}(b_{k+1})$ and $f_{V_{k+1}}=f_{V_k}\circ f_{j_{k+1}}$.
As $|g'_{\bf w}(a)|\le s_F\le |f'_{v_1}(b_1)|$ and $f'_{V_k}(b_k)\to 0$ as $k\to\infty$, the following number is well-defined:
$$m=\max\{k\ge 1: |f'_{V_k}(b_k))|\ge |g'_{\bf w}(a)|\}.$$
Observe that
$$|g'_{\bf w}(a)|\le |f'_{V_m}(b_m)|\le \frac{1}{s_F}|g'_{\bf w}(a)|,$$
by the definition of $s_F$ and $m$ (as if the right-hand side inequality breaks down, we get a contradiction with the maximality of $m$).
Let us show that  the map $f_{V_m}^{-1}\circ g_{\bf w}$ is well-defined on $B(a,r)$.
Indeed,
by Theorem \ref{koebe}, on the one hand,
$$f_{V_m}(B(b_m,\rho))\supset B(a(w),\frac{1}{4}\rho |f'_{V_m}(b_m)|)$$
while on the other hand,
$$g_{\bf w}(B(a,r))\subset B(a({\bf w}),\frac{1}{4}\rho |g'_{\bf w}(a)|)$$
where we use that $t/(1-t)^2=1/4$ for $t=3-\sqrt{8}$.
Since $|f'_{V_m}(b_m))|\ge |g_{\bf w}(a)|$, this shows that
$$g_{\bf w}(B(a,r))\subset f_{V_m}(B(b_m,\rho)).$$
Now, let ${\bf v}(w,a)=V_m$ and $H_{{\bf w},{\bf v}(a,{\bf w})}=f_{V_m}^{-1}\circ g_{\bf w}$.
Thus indeed we have a well-defined univalent map
$$H_{{\bf w},{\bf v}(a,{\bf w})}:B(a,r)\to B(b_m,\rho).$$
We have: $H_{{\bf w},{\bf v}(a,{\bf w})}(a)=b_m$ and
\begin{equation}\label{deriv}
|H'_{{\bf w},{\bf v}(a,{\bf w})}(a)|=\frac{|g'_{\bf w}(a)|}{|f_{V_m}(b_m)|}\in [s_F, 1].
\end{equation}
Hence, again by the Koebe distortion theorem \ref{koebe},
$$H_{{\bf w},{\bf v}(a,{\bf w})}(B(a,r))\supset B(b_m, \frac{1}{4}s_F r)$$
where $r/4=(3-\sqrt{8})\rho/4>\rho/25$.
The map $H:=H_{{\bf w}, {\bf v}(a,{\bf w})}=f_{V_m}^{-1}\circ g_{\bf w}$ is a symmetry of $A$ on $B(a,r)$ because
$H(B(a,r))\subset B(b_m, \rho)$ while $f_{V_m}$ is a symmetry of $A$ on $B(b_m, \rho)$ by the choice of $\rho$.

Let's prove the final claim.
By (\ref{lsymmifs1}), for every $a\in A$, the family $\mathcal{H}_a:=\{H_{{\bf w}, {\bf v}(a,{\bf w})}\}$ is a normal nontrivial
family of symmetries of $A$ on $B(a, r)$, hence, by Theorem \ref{t-symm}, the number of different functions in $\mathcal{H}_a$
is bounded by some $K_a<\infty$. Let us prove that $\sup_{a\in A} K_a<\infty$. Indeed, otherwise we find a sequence $a_n\to a_*$ such that
$K_{a_n}\to\infty$. Then, since $B(a_*,r/2)\subset B(a_n,r)$ for all $n$ large, the number of different symmetries $H_{{\bf w}, {\bf v}(a_n,{\bf w})}$,
$|{\bf w}|>N$, in $B(a_*, r/2)$ is bigger than $K_{a_n}$ for any big $n$, i.e., infinite. On the other hand, as $A$ is a Cantor J-like compact,
this number has to be bounded, a contradiction. Note finally that (\ref{wvinf}) follows directly from the construction.
\end{proof}
\subsection{Proof of Theorem \ref{t-symmifs} and Corollary \ref{c-preper}}\label{ss-main}
First, we show that (I) and (II) are equivalent. Then we prove that
$(I)\rightarrow(III)
\rightarrow(I)$.

(I)$\leftrightarrow$(II):
In one direction, let us prove that (II) implies (I). As $F$ and $G$ can be switched, it is enough to prove that $A_G\subset A_F$ assuming (G) holds where
$A_G, A_F$ are attractor of $G$, $F$ respectively.
Let $a\in A_G$. There exists an {\it infinite word} ${\bf w}=(i_1,...,i_n,...)\in\Lambda^\N$ such that $g_{{\bf w}_n}\to a$ as $n\to\infty$ uniformly in $\Omega_G$, where ${\bf w}_n=(i_1,...,i_n)$.
Passing to a subsequence ${\bf w}_{n_r}$ we find a sequence ${\bf v}_r\to\infty$ and a fixed $k\in\{1,...,k_*\}$ such that
$g_{{\bf w}_{n_r}}\circ g_{{\bf w^k}}^{-1}(z)=f_{{\bf v}_r}\circ f_{{\bf v^k}}^{-1}(z)$ for all $z\in g_{{\bf w^k}}(B_G)$. Fix such $z$.
Since the point $x:=f_{{\bf v^k}}^{-1}(z)$ is a fixed point of $\Omega_F$ and $|{\bf v}_r|\to\infty$, then $f_{{\bf v}_r}(x)\to A_F$, hence, $a\in A_F$.

In the opposite direction, assume that $A=A_G=A_F$, fix $\rho>0$ as in Lemma \ref{l-symmifs} and let $B=B_G=B_F=B(a, r)$, for some $a\in A$
and $r$ as in Lemma \ref{l-symmifs}.
Then apply
Lemma \ref{l-symmifs}: for every finite word ${\bf w}\in\Lambda^*$ with $|{\bf w}|\ge N$ there is a finite word ${\bf v}(a, {\bf w})\in \tilde\Lambda^*$ such that
$H_{{\bf w}, {\bf v}(a,{\bf w})}:=f_{{\bf v}(a,{\bf w})}^{-1}\circ g_{\bf w}$ is well-defined in $B$ and the family of maps
$\{H_{{\bf w}, {\bf v}(a,{\bf w})}\}$ is finite.
That is, there are only finitely many different among them: there is a finite collection of words
${\bf w^k}, {\bf v^k}:={\bf v}(a,{\bf w^k})$, $k=1,...,k_0$,
and some $m_0\ge N$ such that, for each finite word ${\bf w}\in\Lambda^*$ with $|{\bf w}|\ge m_0$ there is $k\in\{1,...,k_0\}$ for which
$H_{{\bf w}, {\bf v}(a,{\bf w})}=H_{{\bf w^k}, {\bf v}(a,{\bf w^k})}$, i.e.
$f_{{\bf v}(a,{\bf w})}^{-1}\circ g_{\bf w}=f_{{\bf v^k}}^{-1}\circ g_{{\bf w^k}}$ identically on $B$, equivalently, the equality
$f_{\bf v}\circ f_{{\bf v^k}}^{-1}=g_{\bf w}\circ g_{{\bf w^k}}^{-1}$ holds on $g_{{\bf w^k}}(B)$, where
${\bf v}={\bf v}(a,{\bf w})$.

(I)$\rightarrow$(III). So assume that $A=A_G=A_F$.
Let $\rho$, $r$ be as in Lemma \ref{l-symmifs}.
By Lemma \ref{l-ifsinv}(2), there exist box-like restrictions $(G, V_G)$ of $(G, \Omega_G)$ and $(F, V_F)$ of $(F, \Omega_F)$
such that every component of either $V_G$ or $V_F$ has the Euclidean diameter less than $r$.
In each (of finitely many) connected component $D_i$ of $V_G$ choose a point $a_i\in A$
and let $B_i=B(a_i, r)$. To each $B_i$ we apply Lemma \ref{l-symmifs} and find some finite collection of words ${\bf t}_{i,j}$,
${\bf u}_{i,j}={\bf v}(a_i, {\bf t}_{i,j})$, such that any $H_{{\bf w}, {\bf v}(a_i,{\bf w})}$ with
$|{\bf w}|$ big enough
is equal to some $H_{{\bf t}_{i,j}, {\bf u}_{i,j}}$
on $B_i$. As $H_{{\bf t}_{i,j}, {\bf u}_{i,j}}$ is a symmetry of $A$ on $B(a_i,r)$ and $D_i\subset B(a_i, r)$, then
\begin{equation}\label{iii}
H_{{\bf t}_{i,j}, {\bf u}_{i,j}}(A\cap D_i)\subset A\subset V_F.
\end{equation}
Let $\Gamma_i\subset \N\times\N$ be a finite set of pairs $(i,j)$ for all words ${\bf t}_{i,j}, {\bf u}_{i,j}$ obtained in this way
for each component $D_i$,
and $\Gamma_G=\cup_i\Gamma_i$. Let $I_G=\{{\bf t}_{i,j}, {\bf u}_{i,j}: (i,j)\in\Gamma_G\}$
and $M_G=\max\{|{\bf t}_{i,j}|, |{\bf u}_{i,j}|: (i,j)\in\Gamma_G\}$. By (\ref{wvinf}) of Lemma \ref{l-symmifs}, there exists $m_G\ge M_G+1$ such that,
for any $i$, if
$|{\bf w}|\ge m_G$ then $|{\bf v}(a_i,{\bf w})|\ge M_G+1$. Thus for every component $D_i$ of $V_G$ and every ${\bf t}\in\Lambda^{m_G}$ there exists ${\bf u}\in\tilde\Lambda^*$,
$|{\bf u}|\ge M_G+1$, and a pair $({\bf t}_{i,j}, {\bf u}_{i,j})$ for some $(i,j)\in\Gamma_G$ as follows:
$f_{\bf u}^{-1}\circ g_{\bf t}=f_{{\bf u}_{i,j}}^{-1}\circ g_{{\bf t}_{i,j}}$ identically on $D_i\cap A$, equivalently, the equality
$$f_{\bf u}\circ f_{{\bf u}_{i,j}}^{-1}=g_{\bf t}\circ g_{{\bf t}_{i,j}}^{-1}$$
holds on $g_{{\bf t}_{i,j}}(D_i\cap A)$ where, by (\ref{iii}), the latter set is a subset of
$f_{{\bf u}_{i,j}}(V_F)$.

Now we do the same switching $G$ and $F$
and finding the corresponding finite collection $I_F=\{{\bf \tilde t}_{i,j}, {\bf \tilde u}_{i,j}: (i,j)\in\Gamma_F\}$ of words,
and the corresponding numbers $M_F$, $m_F$.
The union $I_G\cup I_F$ is (after re-indexing it) the required set of words $\{{\bf t^k}, {\bf u^k}\}_{k=1}^K$, for some $K\in\N$,
and $M=\max\{m_G, m_F\}$. (Notice that there can be repetitions of words in $I_G\cup I_F$.)
Then (III) holds.

(III)$\rightarrow$(I).
It is enough to show that the system of equations (\ref{func-finite}) implies that $A_G\subset A_F$
(the opposite inclusion would follow after switching $G$ and $F$).
Let $\varphi=\varphi_G:U_G\to V_G$, $\psi=\varphi_F:U_F\to V_F$ be the associated to $G$, $F$ inverse complex maps, see Subsection \ref{ss-sep}. As pre-periodic points
of $\varphi$, $\psi$ are all in $A_G$, $A_F$ respectively and, moreover, are dense there,
it would be enough to prove that any pre-periodic point of $\varphi$ is a pre-periodic point of $\psi$, i.e.,
if $x\in A_G$ and the sequence $\{\varphi^n(x)\}_{n\ge 0}$ is finite, then $\{\psi^n(x)\}_{n\ge 0}$ is well-defined and finite too.
For this purpose, we use the system of eq. (\ref{eq-inv}) which is equivalent to (\ref{func-finite}). For convenience, let us repeat it here:
for every component $D$ of $V_G$ and every ${\bf t}\in\Lambda^M$
there exists some ${\bf u}={\bf u}(D_G,{\bf t})\in\tilde\Lambda^*$, $|{\bf u}|\ge M_*+1$,
such that $g_{\bf t}(D\cap A_G)\subset f_{\bf u}(V_F)$ and
\begin{equation}\label{eq-inv1}
\psi^{-|{\bf u^k}|}\circ \psi^{|{\bf u}|}(x)=\varphi^{-|{\bf t^k}|}\circ \varphi^M(x) \mbox{ for every } x\in
g_{\bf t}(D\cap A_G).
\end{equation}
Let $D_i$, $i=1,...,\nu$, be all components of $V_G$.
Denote for brevity $A=A_G$ and let $A_i=A\cap D_i$, $1\le 1\le \nu$. Observe that
$A$ is a disjoint union of sets $\{g_{\bf t}(A_i): {\bf t}\in\Lambda^M, 1\le i\le \nu\}$. This implies that
given $x\in A$ there exist unique ${\bf t}={\bf t}(x)\in\Lambda^M$ and $i\in\{1,...,\nu\}$ such that $x\in g_{\bf t}(A_i)$. Then
the following map
$S: A\to A$ is well-defined and has two expressions:
$$S(x):=\varphi^{-|{\bf t^k}|}\circ \varphi^M(x)=\psi^{-|{\bf u^k}|}\circ \psi^{|{\bf u}|}(x), \ \ x\in A,$$
where ${\bf u}={\bf u}(x)$ and $k=k(x)$.
Let $\{x_i\}_{i=0}^n$ where $x_1=S(x_0),...,x_n=S(x_{n-1})$, be $n$ iterates by $S$ of some $x_0\in A$. Using the first formula for $S$, we get:
$$x_n=\varphi^{-|{\bf t^{k(x_{n-1})}}|}\circ \varphi^{nM-\sum_{i=0}^{n-2}|{\bf t^{k(x_i)}}|}(x_0).$$
Using the second expression for $S$, we similarly get:
$$x_n=\psi^{-|{\bf u^{k(x_{n-1})}}|}\circ \psi^{\sum_{i=0}^{n-1}|{\bf u}(x_i)|-\sum_{i=0}^{n-2}|{\bf u^{k(x_i)}}|}(x_0).$$
It is important to have in mind that
$M\ge M_*+1$, $|{\bf u}(x)|\ge M_*+1$ for all $x\in A$, where $M_*=\max\{|{\bf t^k}|, |{\bf u^k}|, 1\le k\le K\}$.
Hence, for all $n\ge 1$,
$$M_n:=nM-\sum_{i=0}^{n-2}|{\bf t^{k(x_i)}}|\ge n$$
and
$$N_n:=\sum_{i=0}^{n-1}|{\bf u}(x_i)|-\sum_{i=0}^{n-2}|{\bf u^{k(x_i)}}|\ge n.$$
Thus for any $x_0\in A$ and any $n>0$, we have two formulas for $x_n$:
$$x_n=\varphi^{-|{\bf t^{k(x_{n-1})}}|}\circ \varphi^{M_n}(x_0)=\psi^{-|{\bf u^{k(x_{n-1})}}|}\circ \psi^{N_n}(x_0),$$
where $M_n, N_n\to\infty$ as $n\to\infty$ and branches
$$\varphi^{-|{\bf t^{k(x_{n-1})}}|}=g_{{\bf t^{k(x_{n-1})}}}, \ \ \psi^{-|{\bf u^{k(x_{n-1})}}|}=f_{{\bf u^{k(x_{n-1})}}}$$
run over only finitely many maps $g_{{\bf t^k}}$, $f_{{\bf u^k}}$, $k=1,...,K$.
Now, let $x_0$ be a pre-periodic point of $\varphi$. Then, by the first formula for $x_n$,
the sequence $\{x_n\}_{n=0}^\infty$ takes only finitely many different values. Hence, by the second formula for $x_n$,
the sequence $\{\psi^{N_n}(x_0)\}_{n=0}^\infty$ is well-defined and also takes only finitely many different values.
As $N_n\to\infty$, there are some $0<P<Q$ such that $\psi^P(x_0)=\psi^Q(x_0)$, i.e., $x_0$ is a pre-periodic point of $\psi$ as well.
This finishes the proof of the implication (III)$\rightarrow$(I) of Theorem \ref{t-symmifs} and the proof of Corollary \ref{c-preper}.
\subsection{Proof of Theorem \ref{t-finitegen} and Corollaries \ref{c-exact} and \ref{c-sp}}
We begin with a remark. Let $A$ be the attractor of IFS $F=\{f_j\}_{j\in\tilde\Lambda}$ that satisfies SSC.
Suppose that ${\bf v},{\bf \tilde v}$ are two finite words such that, for a point $x\in A$, $f_{\bf v}\circ f^{-1}_{{\bf \tilde v}}(x)=x$. Then we claim that either ${\bf v}={\bf \tilde v}$,
or ${\bf v},{\bf \tilde v}$ have different lengths and the longer of them is a continuation of the shorter one.
Indeed, because the SSC holds for $F$, the fact (used already in the proof of Lemma \ref{l-symmifs}) is that given a point $x\in A$ there exists a unique infinite word $v_\infty(x)$ such that whenever $f_{\hat v}(y)=x$, for some $\hat v$, then $\hat v$ is a truncated finite word of $v_\infty(x)$.
Since $f_{\bf v}\circ f^{-1}_{{\bf \tilde v}}(x)=x$ and  $f_{{\bf \tilde v}}\circ f^{-1}_{{\bf v}}(x)=x$, it follows that both ${\bf v}, {\bf \tilde v}$ are finite truncations
of the same infinite word $v_\infty(x)$, as claimed.

Now, for two holomorphic IFS $F$ and $G$ satisfying the SSC and
having the same attractor $A$
choose $\rho>0$ such that maps $g_{\bf w}$, $f_{\bf v}$, for all finite words ${\bf w}$, ${\bf v}$,  are local symmetries on $B(a, \rho)$, $a\in A$, for all $a\in A$.
Fix any finite word ${\bf w}$ for $G$ and consider iterates $g_{\bf w}^k$, $k=1,2,...$, of the map $g_{\bf w}$ in some disk $B(a,\rho)$, $a\in A$. By Lemma \ref{l-symmifs}, for every $k$
large enough, there is a finite word ${\bf v}_k$ for $F$ such that the map
$H_k:=f_{{\bf v}_k}^{-1}\circ g_{\bf w}^k$ is a well-defined symmetry of $A$ on $B=B(a,r)$ with $r=(3-\sqrt{8})\rho$ which form a non-trivial normal sequence of
symmetries. By Theorem \ref{t-symm} and Lemma \ref{l-symmifs} (where, in particular, the finite number $K=K(F, G)$
was defined), there are $k$ and $l\in\{1,..., K\}$
such that
$H_k=H_{k+l}$ on the ball $B$. That is:
$$f_{{\bf v}_k}^{-1}\circ g_{\bf w}^k=f_{{\bf v}_{k+l}}^{-1}\circ g_{\bf w}^l\circ g_w^k$$
on $B$. Hence,
$$g_{\bf w}^l=f_{{\bf v}_{k+l}}\circ f_{{\bf v}_k}^{-1}$$
on $B_1:=g_{\bf w}^k(B)$. Note now that $B_1$ is a domain that contains the fixed point $\beta_{\bf w}$ of $g_{\bf w}$.
In particular,
$f_{{\bf v}_{k+l}}\circ f_{{\bf v}_k}^{-1}(\beta_{\bf w})=\beta_{\bf w}$ where $\beta_{\bf w}$ is a point of the attractor of $F$. Since $F$ satisfies SSC, by the remark at the beginning of the proof, there are $3$ possibilities: (1) ${\bf v}_k={\bf v}_{k+1}$ which is impossible as $g^l$ is not the identity,
(2) ${\bf v}_k$ is the word ${\bf v}_{k+1}$ extended by some other word $V$ which implies that $g^l=f_V^{-1}$. This is also excluded because both maps share the same fixed point $\beta_{\bf w}$ which is attracting for $g_{\bf w}^l$ and repelling for $f^{-1}_V$. Hence, we are left with the third possibility:
$|{\bf v}_{k+1}|>|{\bf v}_k|$ and ${\bf v}_{k+1}$ is ${\bf v}_k$ extended by some word ${\bf \tilde v}$, so that
$$f_{{\bf v}_{k+l}}=f_{{\bf v}_k}\circ f_{{\bf \tilde v}}$$
on $f_{{\bf v}_k}^{-1}(B_1)$.
Therefore, the following conjugacy equation holds on $B_1$:
$$g_{\bf w}^l=f_{{\bf v}_k}\circ f_{{\bf \tilde v}}\circ f_{{\bf v}_k}^{-1}.$$
As $\beta_{\bf w}\in B_1$ and $g_{\bf w}(\beta_{\bf w})=\beta_{\bf w}$, the point $\tilde\beta:=f_{{\bf v}_k}^{-1}(\beta_{\bf w})$
is a fixed point of $f_{{\bf \tilde v}}$. Thus if $\lambda=g_{\bf w}'(\beta_{\bf w})$, then
$\lambda^l=f'_{{\bf \tilde v}}(\tilde\beta)$, i.e., $\lambda^l\in \sigma(F)$. This proves Corollary \ref{c-sp}.

To prove Theorem \ref{t-finitegen}, for any given $g_i\in G$, start above with a single-symbol word ${\bf w}=(i)$
and find corresponding $l, {\bf v}={\bf v}_k$ and ${\bf \tilde v}$. Then
$g_i^l=f_{\bf v}\circ f_{{\bf \tilde v}}\circ f_{\bf v}^{-1}$ and the conclusion of Theorem \ref{t-finitegen} is a direct consequence of the following easy

{\bf Claim}. Let $R$ be a holomorphic map in a neighborhood of $0$ such that $R(0)=0$ and $\lambda=R'(0)\in\{0<|\lambda|<1\}$.
Given $l$ there are precisely $l$ holomorphic near $0$ maps $g$ such that $g(0)=0$ and $g^l=R$.

Indeed, let $K_R$, $K_g$ be Koenigs' linearization functions (see e.g. \cite{cg}) for $R$, $g$ respectively, i.e.
$K_R$, $K_g$ are local holomorphic injections near $0$ that both fix $0$ and
$R(z)=K_R(\lambda K_R^{-1}(z))$, $g(z)=K_g(\nu K_g^{-1}(z))$ where $\nu^l=\lambda$. Since $K_R, K_g$ are uniquely defined after
normalization $K'_R(0)=K'_g(0)=1$ and $g^l(z)=K_g(\nu^l K_g^{-1}(z))=K_g(\lambda K_g^{-1}(z))=R(z)=K_R(\lambda K_R^{-1}(z))$ where
$|\lambda|\neq 1$,
one gets $K_g=K_R$. Hence, $g(z)=K_R(\nu K_R^{-1}(z))$ where $\nu$ takes $l$ values of $\lambda^{1/l}$. The claim, hence, Theorem \ref{t-finitegen} follow.

\end{document}